\newtheorem{thm}{Theorem}
\numberwithin{thm}{section}
\newtheorem{lem}[thm]{Lemma}
\newtheorem{remark}[thm]{Remark}
\newtheorem{prop}[thm]{Proposition}
\newtheorem{Con}[thm]{Expectation}
\def\cC{{\mathcal C}}
\def\cD{{\mathcal D}}
\def\cF{{\mathcal F}}
\def\cH{{\mathcal H}}
\def\cZ{{\mathcal Z}}
\def\PP{{\mathbb{P}}}
\def\FF{{\mathbb{F}}}
\def\ZZ{{\mathbb{Z}}}
\newcommand{\matr}[2]{\genfrac{(}{)}{0pt}{1}{#1}{#2}}
\DeclareMathOperator{\cha}{char}
\DeclareMathOperator{\Aut}{Aut}
\DeclareMathOperator{\PGL}{PGL}
\title[Automorphisms of Chebyshev plane curves]{Automorphisms of Plane Curves defined from Chebyshev polynomials}
\author{Saeed Tafazolian and Jaap Top}
\date{}
\address{Departamento de Matem\'{a}tica - Instituto de Matem\'{a}tica, Estat\'{i}stica e Computação Cient\'{i}fica
(IMECC) - Universidade Estadual de Campinas (UNICAMP), Rua S\'{e}rgio Buarque de Holanda, 651, Cidade Universit\'{a}ria,  Zeferino Vaz, Campinas, SP 13083-859, Brazil}
\address{Bernoulli Institute for Mathematics, Computer Science, and Artificial Intelligence\\
Nijenborgh~9\\9747 AG Groningen\\ the Netherlands}
\email{saeed@unicamp.br}
\email{j.top@rug.nl}
\begin{document}

\begin{abstract}
We investigate the automorphism groups of the algebraic curves
\[
\mathcal{C}_d : y^d = \varphi_d(x),
\]
where $\varphi_d(x)$ denotes the Chebyshev polynomial of degree $d$, defined over a field $k$ with $p:=\operatorname{char}(k) \nmid 2d$. 
We determine the full automorphism group of $\mathcal{C}_d$ in all the cases considered in this paper, namely for $d=4$, and more generally when $2d = p^r+1$ or $4d = p^r+1$. For all other $d>4$, Expectation~\ref{3.19} predicts
what the automorphism group should be. 

As an application, we show that certain maximal curves of the same genus are not isomorphic.
\end{abstract}

\maketitle
\section{Introduction and preliminaries}
For a field $k$ and an integer $d\geq 1$,
the $d$-th Chebyshev polynomial $\varphi_d(X)$
is the unique monic polynomial such that
in the ring of Laurent polynomials $k[t,t^{-1}]$
the equality 
\[ \varphi_d(t+t^{-1})=t^d+t^{-d}\]
holds. 
Equivalently, considering $x\mapsto \varphi_d(x)$ as a morphism $\PP^1\to\PP^1$ defined
over $k$, the polynomial is characterized by
the condition that the diagram\label{one}
\[    
\begin{array}{lcl}
{\PP}^1 & \stackrel{r\mapsto r^d}{\longrightarrow}& \mathbb{P}^1 \\
{\Big\downarrow}{\scriptstyle r\mapsto r+r^{-1}} && {\Big\downarrow}{\scriptstyle s\mapsto s+s^{-1}} \\
\mathbb{P}^1 & \stackrel{t\mapsto\varphi_d(t)}{\longrightarrow}& \mathbb{P}^1
\end{array}
    \]
commutes. From the diagram (see also \cite{TT1}
and \cite[Remark~6.2]{ABS}) it is readily deduced
that $\varphi_d(X)\in k[X]$ is separable, except
in the case that $\cha(k)$ divides $2d$.

Throughout this paper it is assumed that
$\cha(k)\nmid 2d$. Define the curve $\cC_d$ over $k$ as the closure in $\PP^2$ of the affine curve
given by
\[ y^d=\varphi_d(x).\]
Since $\varphi_d(X)\in k[X]$ is separable,
$\cC_d$ is a smooth curve. Our main goal is to
describe the group $\Aut(\cC_d)$ consisting of 
the automorphisms
of the curve $\cC_d$ defined over a fixed algebraic
closure $\overline{k}$ of $k$. We restrict to
the case $d\geq 4$, or equivalently, the genus
$g(\cC_d)=(d-1)(d-2)/2\geq 2$, to ensure that
$\Aut(\cC_d)$ is a finite group. A
classical result \cite{Se} of B.~Segre
(see also \cite[Theorem~11.29]{HKT}) yields
that automorphisms of any smooth curve in $\PP^2$ extend to linear
automorphisms of $\PP^2$, hence $\Aut(\cC_d)$ 
defines a finite subgroup of $\PGL(3,\overline{k})$.

The main goal of this paper is to determine the automorphism
groups of the curves $\cC_d$ in several significant cases.
As a tool in our study, we revisit the classification of the
total inflection points of $\cC_d$ and provide a transparent proof. Specifically, we prove the following:
\begin{enumerate}
    \item[\rm(I)] The total inflection points of \( \mathcal{C}_d \) are precisely the \( d \) points in \( \mathcal{Z}(y) \cap \mathcal{C}_d \), except in the case where \( \operatorname{char}(k) = p \geq 3 \) and \( d = \frac{p^m+1}{2} \) for
    some $m\geq 1$. In this exceptional case there are \( 3d \) total inflection points. This result  extends \cite[Theorem~6.3]{ABS} (see Proposition~\ref{inflection}).

 \item[\rm(II)] If \( 2d = q+1 \) for some power \( q \) of \( p \), which is the
 situation described in (I), then \( \mathcal{C}_d \) is isomorphic to a Fermat curve of degree \( d \) and its automorphism group is \( (\mathbb{Z}/d\mathbb{Z} \times \mathbb{Z}/d\mathbb{Z})  \rtimes S_3 \) (see Propositions~\ref{ProFermat} and \ref{autFermat}); Although this fact follows from the general results of
\cite{HKT}, we provide here a short and elementary proof
which avoids the machinery used in that work.
    
    \item[\rm(III)]If \( d = 4 \), we distinguish the following cases:

\begin{itemize}
    \item For \( p = 5 \), the automorphism group of \( \mathcal{C}_d \) is isomorphic to \( \mathbb{Z}/4\mathbb{Z} \times A_4 \) (see Remark~\ref{re3.11}).
    \item For \( p = 7 \), the curve is isomorphic to the Fermat curve, and the automorphism group of \( \mathcal{C}_d \) is isomorphic to \( (\mathbb{Z}/d\mathbb{Z} \times \mathbb{Z}/d\mathbb{Z}) \rtimes S_3 \) (see Propositions~\ref{ProFermat} and \ref{autFermat}).
    \item For \( p \neq 2, 5, 7 \), the automorphism group of \( \mathcal{C}_d \) is isomorphic to \( \mathbb{Z}/4\mathbb{Z} \rtimes (\mathbb{Z}/2\mathbb{Z} \times \mathbb{Z}/2\mathbb{Z}) \) (see Proposition~\ref{d4}).
\end{itemize}

    \item[\rm(IV)] If \( 4d = q+1 \) for some power \( q \) of \( p \), then the automorphism group of \( \mathcal{C}_d \) is \( \mathbb{Z}/d\mathbb{Z} \rtimes S_3 \) (see Theorem~\ref{thm3.15} and Proposition~\ref{3.18}).
    \item[\rm(V)] We show that for many  integers \( m \) and \( n \) and
    finite fields $\FF_{q^2}$, the maximal curves defined by \( y^n = \varphi_m(x) \) and \( y^n = x^m + 1 \) are not isomorphic despite having the same genus. This result provides a refined proof of \cite[Corollary~6.4]{ABS} and also generalizes it (see Propositions~\ref{HurFer}, \ref{M1}, and \ref{M2}).

\end{enumerate}

The organization of this paper is as follows. In Section~\ref{S2}, we analyze the total inflection points of \( \mathcal{C}_d \) and clarify certain aspects of \cite{ABS}.  Section~\ref{S3} is dedicated to the classification of the automorphism groups in the cases   \( 2d = q+1 \), \(d=4\) and \( 4d = q+1 \), respectively. Finally, Section~\ref{S4} discusses applications to maximal curves over finite fields.

\medskip

\section{Total inflection points}\label{S2}
Given a smooth curve $\cC\subset\PP^2$ of degree $d$, a total
inflection point of $\cC$ is a point $P\in\cC$
such that the associated tangent line to $\cC$  meets $\cC$ in $P$ with multiplicity $d$.

Since automorphisms of any such smooth plane curve $\cC$ are given by linear
maps on $\PP^2$, they act as permutations on the set
of total inflection points of $\cC$.
We will
compute these points in the case of $\cC_d$,
assuming $d\geq 3$ and $\cha(k)\nmid 2d$.
The result will help us in our ultimate goal, namely obtaining information on
$\Aut(\cC_d)$.
The computation extends the proof of
\cite[Theorem~6.3]{ABS}, where the special case
$p:=\cha(k)>2$ and $\overline{-1}\in \langle\, \overline{p}\,\rangle 
\subseteq(\mathbb{Z}/2d\mathbb{Z})^\times$
is considered. Moreover, our computations offer a refined analysis of some statements in \cite[p.~17]{ABS}. For instance, the reasoning there seems to suggest that $\mathcal{C}_d$ has either $d$ or $2d$ total inflection points, which does not align with the result in \cite[Theorem~6.3]{ABS}.

\begin{prop}\label{inflection}
Let $d\in\mathbb{Z}_{\geq 4}$. The only total inflection points on the curve $\cC_d\subset \PP^2$ defined as the closure of the affine equation $y^d=\varphi_d(x)$
over a field $k$ with $\cha(k)\nmid 2d$, are
the $d$ points in $\cZ(y)\cap\cC_d$, except in the
following situation (where there are $3d$ such points, namely the ones in the intersection
$\mathcal{Z}(y\cdot(x+2)\cdot(x-2))\cap \cC_d$):
\begin{enumerate}
    \item[] $\cha(k)=p\geq 3$ and $d=(p^m+1)/2$ for any $m\in\mathbb{Z}_{\geq 1}$ such that $d\geq 4$.
\end{enumerate}
\end{prop}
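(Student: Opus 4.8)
The plan is to decide, for each point $P\in\cC_d$, whether some line meets $\cC_d$ only at $P$ (necessarily with multiplicity $d$); equivalently, whether the tangent line at $P$ has contact of order $d$. Writing the curve in local coordinates, this becomes a statement about the coefficients of $\varphi_d$ at the $x$-coordinate of $P$. Concretely, at an affine point $(\beta,\gamma)$ with $\gamma\neq 0$ the tangent is $y=\gamma+\lambda(x-\beta)$ with $\lambda=\varphi_d'(\beta)/(d\gamma^{d-1})$, and substituting it into $y^d=\varphi_d(x)$ shows that $(\beta,\gamma)$ is a total inflection point exactly when
\[
\varphi_d(\beta+z)\equiv(\gamma+\lambda z)^d \pmod{z^d},
\]
i.e. when the truncation of $\varphi_d(\beta+z)$ is a perfect $d$-th power of a linear form. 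First I would dispose of the easy points. If $\gamma=0$, i.e. $\varphi_d(\beta)=0$, the tangent is the vertical line $x=\beta$; since $\varphi_d$ is separable this line meets $\cC_d$ only at $(\beta,0)$ with multiplicity $d$, and one checks $[0:1:0]\notin\cC_d$, so nothing is lost at infinity. This yields the $d$ points of $\cZ(y)\cap\cC_d$. At the $d$ points at infinity $[1:\zeta:0]$ (with $\zeta^d=1$), a local computation in the chart $X\neq 0$ shows the tangent meets $\cC_d$ with multiplicity exactly $2$, so for $d\geq 3$ these are never total inflection points.

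The decisive input for the remaining points is the functional equation of $\varphi_d$. I would use $\varphi_d^2=\varphi_{2d}+2$ (from $\varphi_2(u)=u^2-2$ and $\varphi_{2d}=\varphi_2\circ\varphi_d$) together with $\varphi_{q}(x)\equiv x^{q}\pmod p$ for $q=p^m$, which follows at once from $\varphi_{q}(t+t^{-1})=t^{q}+t^{-q}=(t+t^{-1})^{q}$. When $2d=q+1$, substituting $x=t+t^{-1}$ with $t=1+s$ (so that $x-2=s^2/(1+s)$ and $t^{q}=1+s^{q}$) and reducing modulo $s^{q+1}$—which is the same as reducing modulo $z^d$, since $z=s^2/(1+s)$—gives the clean congruence $\varphi_{q+1}(2+z)\equiv 2+z\pmod{z^d}$, whence $\varphi_d(2+z)^2=\varphi_{q+1}(2+z)+2\equiv 4+z$. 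The tangent power is $(\eta+\tfrac{d\eta}{2}z)^d=2(1+\tfrac d2 z)^d$, and its square is $4(1+\tfrac d2 z)^{q+1}\equiv 4+2dz\equiv 4+z\pmod{z^d}$ as well (again by Frobenius, using $2d\equiv1$ and $q\geq d$). Two power series with the same nonzero constant term $2$ and equal squares modulo $z^d$ agree modulo $z^d$, so $\varphi_d(2+z)\equiv 2(1+\tfrac d2 z)^d\pmod{z^d}$. Thus every point with $x=2$ is a total inflection point; the point $x=-2$ is treated identically, and together with $\cZ(y)\cap\cC_d$ this produces the claimed $3d$ points.

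The hard part is the converse: that when $2d\neq p^m+1$ there are no total inflection points with $\gamma\neq0$ at all, and that the exceptional condition is the exact equality $2d=p^m+1$ rather than the weaker divisibility $p\mid 2d-1$. From the order-$3$ contact condition (the $z^2$-coefficient in the displayed congruence) together with the Chebyshev identities $(4-x^2)\varphi_d''=x\varphi_d'-d^2\varphi_d$ and $d^2\varphi_d^2+(4-x^2)(\varphi_d')^2=4d^2$, one extracts the necessary relation $\beta\varphi_d'(\beta)\varphi_d(\beta)=d\varphi_d(\beta)^2+4d(d-1)$, which $x=\pm2$ satisfy but which by itself does not isolate them, so it must be combined with the higher-order conditions to exclude all other $\beta$. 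For $\beta=\pm2$ the full system is controlled by the closed form $[z^k]\varphi_d(2+z)=\frac{2d}{d-k}\binom{d+k-1}{2k}$, which I would compare via Lucas' theorem with $[z^k]\,2(1+\tfrac d2 z)^d=2\binom dk(d/2)^k$ for every $k\leq d-1$; the simultaneous congruences modulo $p$ hold precisely when $2d-1$ is a power of $p$. Carrying out this base-$p$ digit analysis uniformly, especially in the ranges $k\geq p$ where ordinary derivatives degenerate, is the main obstacle. As an alternative route for the exclusion step, one may try to bound the number of total inflection points by $3d$ through the Hessian, but this requires controlling the (non-)classicality of $\cC_d$, which itself changes exactly at $2d=p^m+1$.
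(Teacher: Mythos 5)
Your setup is sound and the parts you complete are correct: the pointwise criterion $\varphi_d(\beta+z)\equiv(\gamma+\lambda z)^d \pmod{z^d}$ is the right local reformulation, the points of $\cZ(y)\cap\cC_d$ and the points at infinity are handled correctly, and your verification that the points over $x=\pm 2$ are total inflections when $2d=q+1$ (via $\varphi_d^2=\varphi_{2d}+2$, Frobenius, and the unit $f+g$ to take a square root mod $z^d$) is a genuinely nice argument not in the paper. But the proposition's actual content is the exclusion statement, and that is exactly what your proposal does not prove. Two things are missing. First, you never show that a point $(\beta,\gamma)$ with $\gamma\neq 0$ and $\beta\neq\pm 2$ is never a total inflection point, in any characteristic; the order-$3$ relation $\beta\varphi_d'(\beta)\varphi_d(\beta)=d\varphi_d(\beta)^2+4d(d-1)$ is correct but, as you concede, does not isolate $\beta=\pm2$, and "combine with the higher-order conditions" is a hope, not an argument. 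Second, even at $\beta=\pm 2$ you do not establish that total inflection forces the exact equality $2d=p^m+1$ rather than the congruence $p\mid 2d-1$; the proposed Lucas-theorem comparison of $[z^k]\varphi_d(2+z)=\frac{2d}{d-k}\binom{d+k-1}{2k}$ with $2\binom{d}{k}(d/2)^k$ for all $k\le d-1$ is only announced, and you yourself flag the range $k\geq p$ as the obstacle. This congruence-versus-exact-power distinction is precisely where the paper has to work hardest: after reducing to the identity $\left(t-\gamma+\frac1t\right)^d+\left(t+\gamma+\frac1t\right)^d=2t^d+2t^{-d}$ with $\gamma=\pm2$, it writes $2d=1+qn$ with $q$ a power of $p$, $\gcd(n,p)=1$, and kills $n>1$ by exhibiting a single nonzero coefficient (of $s^{2d-2q-2}$, after $t=s^2$), with a separate variant in characteristic $3$. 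Without an executed analogue of that step, and without any mechanism ruling out other values of $\beta$, the proof is incomplete on its central claim.

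For comparison: the paper avoids the pointwise analysis entirely by classifying all lines $\ell$ with $\#(\ell\cap\cC_d)=1$, reducing case (4) ($\ell\colon y=ax+bz$, $a\neq 0$) to the polynomial identity $(ax+bz)^d-\Phi_d(x,z)=(c_1x+c_2z)^d$, and then substituting $x=t+\frac1t$ so that everything becomes a comparison of Laurent coefficients at $t^{d-1},\dots,t^{d-4}$. That substitution is what makes the exclusion tractable: it converts "which $\beta$ can occur" into four explicit equations in $\alpha,\beta,\gamma$ whose case analysis (including the separate $\cha(k)=3$ case and the exact-power argument above) closes every door your sketch leaves open. If you want to salvage your approach, the honest path is to carry out the base-$p$ digit analysis you describe and, independently, to turn the higher-order contact conditions at general $\beta$ into a finite system as the paper does; as it stands, the proposal proves only the easy inclusion of the stated inflection points, not their completeness.
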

\begin{proof}
    The curve $\cC_d$ is defined by the
    homogeneous equation
    \[ y^d=z^d\cdot\varphi_d(x/z)=:\Phi_d(x,z).\]
    We consider all possibilities for a line
    $\ell\subset\PP^2$ with $\#(\ell\cap \cC_d)=1$.

    \noindent (1) The line at infinity ($z=0$)
    leads to $\ell\cap\cC_d$ given by $y^d=x^d$,
    hence $d$ distinct intersection points.\\
    (2) A line $\ell\colon x=az$ (for fixed $a$)
    results in an intersection given by $y^d=z^d\cdot \varphi_d(a)$. Hence this yields
    a single intersection point precisely when
    $a$ is a zero of $\varphi_d(X)$. The
    corresponding $d$ total hyperflexes are $(a:0:1)$,
    with tangent line $x=az$.\\
    (3) A line $\ell\colon y=az$ leads to the
    affine equation $\varphi_d(x)=a^d$.
    From the diagram on page~\pageref{one} it is
    clear that $t\mapsto \varphi_d(t)$ has,
    over points in $\PP^1\setminus\{\infty\}$,
    only ramification of index $2$. Hence such a line $\ell$ intersects $\cC_d$ in at least $d/2$ points.\\
    (4) What remains, is the case $\ell\colon y=ax+bz$ where $a\neq 0$. This gives
    $\#(\ell\cap \cC_d)=1$ precisely in
    the situation
    \[ (ax+bz)^d-\Phi_d(x,z)=
    (c_1x+c_2z)^d.\]
    Here an equality $a^d-1=c_1^d$ is obtained by
    considering the coefficient of $x^d$,
    while the coefficient of $z^d$
    results in $b^d-\varphi_d(0)=c_2^d$.

    We claim that $c_1\neq 0$. Indeed, the
    assumption $c_1=0$ implies that
    $\varphi_d(x)=(ax+b)^d-c_2^d$. Now observe
    that the
    two maps $t\mapsto\varphi_d(t)$ and
    $t\mapsto (at+b)^d-c_2^d$ are clearly distinct
    (the first has only $\infty$ as a ramification
    point with index $d$ while the other is 
    totally ramified over both $\infty$ and $-c_2^d$).
    As a consequence,
    \[  a^d\left(x+\frac{b}{a}\right)^d-\varphi_d(x)=(a^d-1)(x+c_2)^d.\]
    Writing
    \[\alpha:=a^d,\;\beta:=b/a,\;\gamma:=c_2,\;\text{and}\;x:=t+\frac1t,\]
    one obtains
    \[ \alpha\left(t+\beta+\frac1t\right)^d-t^d-t^{-d}=(\alpha-1)\left(t+\gamma+\frac1t\right)^d.\]
    Comparing coefficients results in the following
    system of equations.
    \label{system}\[ \arraycolsep=1.1pt
    \begin{array}{lrclr}
    t^{d-1}\colon& \alpha\beta d&=&(\alpha\!-\!1)\gamma d &(1)\\
    t^{d-2}\colon &\alpha\beta^2\binom{d}{2}+\alpha d&=&(\alpha\!-\!1)\left(\gamma^2\binom{d}{2}+d\right)&(2)\\
    t^{d-3}\colon&\alpha\beta^3\binom{d}{3}+2\alpha\beta\binom{d}{2}&=&(\alpha\!-\!1)\left(\gamma^3\binom{d}{3}+2\gamma\binom{d}{2}\right)&(3)\\
    t^{d-4}\colon&\alpha\!\left(\beta^4\binom{d}{4}+
    \beta^2d\binom{d-1}{2}+\binom{d}{2}\right)
    &=&
    (\alpha\!-\!1)\left(\gamma^4\binom{d}{4}+
    \gamma^2d\binom{d-1}{2}+\binom{d}{2}\right) &(4)
    \end{array}\]
    The remainder of the calculation distinguishes
    various possibilities for $d\in k$.
    \begin{itemize}
    \item Assume that $d=1$ in the field $k$. 
    The equality (2) results in the contradiction $\alpha=\alpha-1$.
    \item Next, consider the case $d=2$ in $k$,
    with $\cha(k)\neq 3$. Now equality (4) yields
    the contradiction $\alpha=\alpha-1$. 
    
        \item Assume $d-1\neq 0$ and $d-2\neq 0$ in $k$  (hence in particular $\cha(k)\neq 3$). Then (1) and (3) yield
        the system
        \[\arraycolsep=1.1pt
        \left\{\begin{array}{rcl}
        \alpha\beta&=&(\alpha-1)\gamma\\
        \alpha\beta^3(d-2)+6\alpha\beta&=&
        (\alpha-1)\gamma^3(d-2)+6(\alpha-1)\gamma,
        \end{array}\right.
        \]
        or equivalently (add a suitable multiple of
        the first equation to the second and
        multiply by $(d-2)^{-1}$), one obtains
        \[\arraycolsep=1.1pt
        \left\{\begin{array}{rcl}
        \alpha\beta&=&(\alpha-1)\gamma\\
        \alpha\beta^3&=&
        (\alpha-1)\gamma^3.
        \end{array}\right.
        \]
        This means $\alpha\beta^3=(\alpha-1)\gamma\cdot\gamma^2=\alpha\beta\gamma^2$, so since $\alpha\neq 0$ we are in one of the
        cases
        \[ \beta=0\;\;\text{or}\;\; 
        \beta=\gamma\;\;\text{or}\;\;\beta=-\gamma.\]
        If $\beta=0$ then (1) and (2) result in
        the contradiction $\alpha=\alpha-1$.\\
        If $\beta=\gamma$ (which has to be nonzero by what we just showed), then (1) yields the same contradiction $\alpha=\alpha-1$.

        The conclusion is that $\beta=-\gamma$, and this is nonzero. Hence (1) implies $\alpha=2^{-1}$. Substituting these values of $\alpha,\beta$ in (1)-(4) leads to
        the system
        \[\left\{
        \begin{array}{l}
        \gamma^2(d-1)=-2,\\
        \gamma^4(d-2)(d-3)+\gamma^2 \cdot12(d-2)=-12.
        \end{array}\right.\]
        So $\gamma^2=2/(1-d)$ and 
        \[ \frac{4}{(d-1)^2}(d-2)(d-3)-\frac{24}{d-1}(d-2)+12=0,\]
        resulting in $d\in\{3,2^{-1}\}\subset k$. 
        This is not possible if $\cha(k)=0$, since
        we assume $d\in\mathbb{Z}_{\geq 4}$.
        For $p=\cha(k)\geq 5$, two situations remain:\\
        (*) if $d\equiv 3\bmod p$, one obtains
        \[  \left(t-i+\frac1t\right)^d+\left(t+i+\frac1t\right)^d=2t^d+2t^{-d}\]
        with $i$ any primitive $4$-the root of unity. Write $d=3+qn$ with $q$ a power of
        $p$ and $\gcd(n,p)=1$. Comparing
        coefficients of $t^{qn-q+2}$ results in $\pm 6n\equiv 0\bmod p$, a contradiction.
        Hence this case cannot occur.\\
        (*) If $2d\equiv 1\bmod p$, one obtains
        $\gamma^2=4$ and
        \[  \left(t-2+\frac1t\right)^d+\left(t+2+\frac1t\right)^d=2t^d+2t^{-d}.\]
        Taking $t=s^2$, this gives
        \[\left(s-\frac1s\right)^{2d}+\left(s+\frac1s\right)^{2d}=2s^{2d}+2s^{-2d}.\]
        Now write $2d=1+qn$
 with $q$ a power of $p$ and $\gcd(n,p)=1$. 
If $n=1$, the given equality indeed holds in $\FF_p[s,s^{-1}]$.
For $n>1$, considering the coefficient of $s^{2d-2q-2}$ leads to the contradiction $2n\equiv 0\bmod p$.
\item The final case to consider, is $\cha(k)=3$
and $d\equiv 2\bmod 3$. Writing $d=2+3m$,
this means $\binom{d}{3}\equiv m\bmod 3$ and
$\binom{d}{4}\equiv -m\bmod 3$.
The system of equations derived on p.~\pageref{system} then takes the form
\[ 
    \begin{array}{lrclcr}
    t^{d-1}\colon& 2\alpha\beta &=&2(\alpha\!-\!1)\gamma  &&(1)\\
    t^{d-2}\colon &\alpha\beta^2+2\alpha &=&(\alpha\!-\!1)\left(\gamma^2+2\right)&&(2)\\
    t^{d-3}\colon&m\alpha\beta^3+2\alpha\beta&=&(\alpha\!-\!1)\left(m\gamma^3+2\gamma\right)&&(3)\\
    t^{d-4}\colon&\alpha\!\left(-m\beta^4
    +1\right)
    &=&
    (\alpha\!-\!1)\left(-m\gamma^4+
    1\right) &&(4)
    \end{array}\]
    Combining (1) and (3) results, as in the previous case, in the equalities $\beta=-\gamma$ and $\alpha=2^{-1}=2$. Then (2), (4)
    reduce to the system
    \[
    \left\{\begin{array}{c}2\gamma^2+1=\gamma^2+2,\\
    m\gamma^4+2=-m\gamma^4+1\end{array}\right.
    \]
    which is equivalent to $\gamma=\pm 2$ and $m=1\in k$. So in integers one obtains $d=2+3(1+3n)\equiv 5\bmod 9$, and in $k[t,t^{-1}]$ the equality
    \[  \left(t+2+\frac1t\right)^d+\left(t-2+\frac1t\right)^d=2t^d+2t^{-d}\]
    holds. Writing $t=s^2$ and $2d=1+q\ell$ with
    $q\geq 9$ a power of $3$ and $3\nmid \ell$,
    this takes the form
    \[ (s+s^{-1})(s^q+s^{-q})^\ell+(s-s^{-1})(s^q-s^{-q})^\ell=2s^{2d}+2s^{-2d}.\]
    In case $\ell=1$ (in $\mathbb{Z}$) this identity of Laurent polynomials indeed holds.
    In the remaining case $\ell\geq 2$,
    the coefficient of $s^{q\ell-2q-1}$ in the left-hand-side equals $2\ell\neq 0$ in $k$,
    hence the equality is not possible.
\end{itemize}
This completes the proof; the calculations show that indeed the total inflection points are the ones described in the statement of the proposition.
\end{proof}
\begin{remark}{\rm Note that in Proposition~\ref{inflection}, 
in any case where only $d$ total inflection points exist, these points are all
on a unique line (namely, on the line given by $y=0$). Similarly, in any case with
$3d$ total inflection points ($d\geq 4$), there
is a unique set of three lines such that
their union intersected with $\cC_d$ equals
this set of points (namely, the lines
defined by $y=0$ and $x=\pm 2$).
}\end{remark}
\section{Automorphism groups}\label{S3}

 As a first step towards understanding the group
 of automorphisms of a plane curve $\cC_d$ 
 corresponding to $y^d=\varphi_d(x)$, we distinguish according to the number of
 total inflection points as given by
 Proposition~\ref{inflection}.

\begin{prop}\label{ProFermat}
Let \( \mathcal{C}_d \) be the curve defined by  
\[
\mathcal{C}_d: y^d = \varphi_d(x)
\]
over a field of characteristic \( p > 0 \).  
If \( 2d = q + 1 \) with \( q \) a power of \( p \), then \( \mathcal{C}_d \) is isomorphic over \( \mathbb{F}_{q^2} \) to the Fermat curve  
\[
\mathcal{F}: v^d = u^d + 1.
\]
\end{prop}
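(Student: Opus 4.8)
The plan is to show that in characteristic $p$ with $2d=q+1$ the Chebyshev polynomial collapses to a very simple shape, after which the affine equation of $\cC_d$ becomes a Fermat equation by a single linear change of coordinates. Concretely, I would first prove the polynomial identity
\[ 2\varphi_d(X)=(X+2)^d+(X-2)^d, \]
valid precisely in this setting, and then read the isomorphism off from it.

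To establish the identity I would use the defining parametrization. Substituting $X=s^2+s^{-2}$ gives $X+2=(s+s^{-1})^2$ and $X-2=(s-s^{-1})^2$, while $2\varphi_d(X)=2(s^{2d}+s^{-2d})$ because $\varphi_d(s^2+s^{-2})=\varphi_d(t+t^{-1})=t^d+t^{-d}$ with $t=s^2$. Hence it suffices to verify the Laurent‑polynomial identity $(s+s^{-1})^{2d}+(s-s^{-1})^{2d}=2s^{2d}+2s^{-2d}$, which is exactly the one already met in the proof of Proposition~\ref{inflection} in the case $2d=q+1$ (the value $n=1$ there). It follows at once from the Frobenius: since $2d=q+1$ one has $(s\pm s^{-1})^{q+1}=(s^q\pm s^{-q})(s\pm s^{-1})$, and expanding and adding the two resulting products cancels all cross terms, leaving $2s^{q+1}+2s^{-q-1}=2s^{2d}+2s^{-2d}$. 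Because $s^2+s^{-2}$ takes infinitely many values, the two sides then agree as polynomials in $X$, which is the claimed identity.

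With the identity in hand, the affine equation of $\cC_d$ reads $2y^d=(x+2)^d+(x-2)^d$, or homogeneously $2y^d=(x+2z)^d+(x-2z)^d$. I would then apply the linear change of coordinates $u:=x+2z$, $v:=x-2z$, $w:=y$ on $\PP^2$; its matrix has determinant $\pm4\neq0$ (recall $p\neq2$, as $2d=q+1$ forces $p$ odd), so it lies in $\PGL(3,\FF_{q^2})$, and it transforms the equation into $u^d+v^d=2w^d$. Finally, choosing $\lambda\in\FF_{q^2}$ with $\lambda^d=1/2$ and replacing $w$ by $\lambda w$ clears the constant, producing the Fermat equation $u^d+v^d=w^d$, i.e.\ $v^d=u^d+1$ after renaming and dehomogenizing.

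The only point that genuinely needs checking is that $\lambda$ can be taken in $\FF_{q^2}$, that is, that $2$ is a $d$-th power there. Since $d=(q+1)/2$ divides $q^2-1$, an element of $\FF_{q^2}^\times$ is a $d$-th power if and only if it is annihilated by $(q^2-1)/d=2(q-1)$; and indeed $2^{\,2(q-1)}=1$, because $2\in\FF_q^\times$ already satisfies $2^{q-1}=1$. Thus $\lambda$ exists over $\FF_{q^2}$, and the composite of the two linear maps is an element of $\PGL(3,\FF_{q^2})$ carrying the smooth plane curve $\cC_d$ isomorphically onto $\cF$. I expect the derivation of the polynomial identity to be the one substantive step; once it is available, the coordinate change and the $d$-th power check are entirely routine.
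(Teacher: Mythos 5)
Your proposal is correct, and at its core it runs on exactly the same computational engine as the paper's proof: the Frobenius identity $(s\pm s^{-1})^{q+1}=(s^q\pm s^{-q})(s\pm s^{-1})$ applied after a substitution by squares. Indeed, the identity you isolate, $2\varphi_d(X)=(X+2)^d+(X-2)^d$, is equivalent (via the fractional-linear substitution $X=\tfrac{2u+2}{u-1}$, $u=s^2$) to the identity $(u-1)^d\varphi_d\bigl(\tfrac{2u+2}{u-1}\bigr)=2u^d+2$ that the paper verifies, and it is also the $n=1$ case already established inside the proof of Proposition~\ref{inflection}, as you note. Where you differ is in the packaging, and your packaging buys something concrete. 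The paper writes down the map $\mathcal{F}\to\cC_d$, $(u,v)\mapsto\bigl(\tfrac{2u+2}{u-1},\tfrac{av}{u-1}\bigr)$ with $a^d=2$, argues it is birational onto its image, and then invokes smoothness of both curves to upgrade birational to biregular. You instead isolate the collapse of $\varphi_d$ as a standalone polynomial identity, after which the isomorphism is a manifestly linear change of coordinates on $\PP^2$ going in the opposite direction, $\cC_d\to\mathcal{F}$ (in fact your linear map and the paper's are inverse to one another up to a diagonal automorphism of the Fermat curve), so no birational-to-isomorphism step is needed at all. You are also more careful on the rationality point: the paper asserts $a\in\FF_{q^2}$ without comment, whereas you verify that $2$ is a $d$-th power in $\FF_{q^2}$ from $2^{(q^2-1)/d}=2^{2(q-1)}=1$. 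Both proofs are complete; yours is marginally more self-contained, the paper's has the advantage of exhibiting the map in the direction that is reused in the subsequent remark about matching up the lines of total inflection points.
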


\begin{proof}
Consider the map $\mathcal{F}\to\PP^2$ given in affine coordinates by
\[
(u,v) \mapsto \left(x:=\frac{2u + 2}{u-1}\,,\, y:= \frac{av}{u-1}\right)
\]
where $a^d=2$ and so $a \in \FF_{q^2}$. 
Since $x,y$ generate the
function field $\FF_{q^2}(\mathcal{F})$,
this map defines a birational map to its
image. We
will show that its image is the curve $\mathcal{C}_d$, in other words we show
\[
2v^d= (u-1)^{d}y^d=(u-1)^{d} \varphi_d(\frac{2u+2}{u-1})
\]
in case $2d=q+1$. Equivalently, we claim that for $d=(q+1)/2$, in the polynomial
ring $\FF_p[u]$ the equality $(u-1)^{d} \varphi_d(\frac{2u+2}{u-1}) = 2u^d+2$ holds.
Indeed, write $u=s^2$. In $\mathbb{F}_p(s)\supset \FF_p(u)$ one observes
\[ \frac{2u+2}{u-1}=\frac{s+1}{s-1}+\frac{s-1}{s+1}.\]
From this, 
we obtain  
\[
(u-1)^{d} \varphi_d\left(\frac{2u+2}{u-1}\right)
=(s+1)^d(s-1)^d\left(\left(\frac{s+1}{s-1}\right)^d+\left(\frac{s-1}{s+1}\right)^d\right).
\]
Since \( 2d = 1 + q \), 
this equals
\[ (s^q+1)(s+1)+(s^q-1)(s-1)=2u^d+2.\]
Hence we described an explicit birational map from $\mathcal{F}$ to $\cC_d$.
Since both curves are smooth, the result follows.
\end{proof}

\begin{remark}\rm{
For \( 2d = q + 1 \), it is known that the curve \( y^d = \varphi_d(x) \) is maximal over \( \mathbb{F}_{q^2} \).  
Thus, by \cite[Theorem 1.1]{CKT}, the curve is isomorphic to the maximal Fermat curve of degree \( d=(q+1)/2 \).  
Here we provided a direct proof of this result.}

The explicit isomorphism obtained here can
in fact be obtained using Proposition~\ref{inflection}. Namely, it is
well known that in terms of homogeneous
coordinates $u,v,w$ describing the Fermat curve $\mathcal{F}\colon v^d=u^d+w^d$,
the total inflection points of $\mathcal{F}$
are the $3d$ points in $\mathcal{F}\cup\mathcal{Z}(uvw)$.
Moreover, $\Aut(\mathcal{F})$ permutes
the three lines $\mathcal{Z}(u), \mathcal{Z}(v), \mathcal{Z}(w)$. The map used in
the proof of Proposition~\ref{ProFermat}
yields one of the possible ways to associate
these lines to the analogous lines for $\cC_d$ via a linear map.
\end{remark}

\begin{prop}\label{autFermat}
      Consider the curve $\mathcal{C}_d$ given by  \( y^d = \varphi_d(x) \) defined over a field of characteristic \( p > 0 \). 
If \( 2d = q + 1 \) with $q$ a power of $p$, then the automorphism group $G$ of the curve \( \mathcal{C}_d \) has order $6d^2$ and has a normal abelian subgroup $H \cong \mathbb{Z}/d\mathbb{Z} \times \mathbb{Z}/d\mathbb{Z}$ of order $d^2$ such that $G /H \cong S_3.$
\end{prop}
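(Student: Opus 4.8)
The plan is to leverage Proposition~\ref{ProFermat}, which identifies $\mathcal{C}_d$ with the Fermat curve $\mathcal{F}: v^d = u^d + w^d$ over $\mathbb{F}_{q^2}$, and then to determine $\Aut(\mathcal{F})$ directly. Since the two curves are isomorphic, their automorphism groups coincide, so it suffices to prove the claim for $\mathcal{F}$. By Segre's theorem every automorphism of $\mathcal{F}$ is the restriction of an element of $\PGL(3,\overline{k})$, so I would work entirely inside $\PGL(3,\overline{k})$ and exhibit the asserted group structure.

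First I would produce the obvious automorphisms to build a subgroup of the claimed order. The diagonal maps $(u:v:w)\mapsto(\zeta u:v:w)$ and $(u:v:w)\mapsto(u:\zeta v:w)$ with $\zeta$ a $d$-th root of unity preserve $v^d=u^d+w^d$ and generate an abelian group $H\cong\mathbb{Z}/d\mathbb{Z}\times\mathbb{Z}/d\mathbb{Z}$ of order $d^2$ (the third scaling on $w$ being dependent on these in $\PGL$). Next, the coordinate permutations of the three variables $u,v,w$ give a copy of $S_3$ in $\Aut(\mathcal{F})$: here I would use the Remark's observation that $\Aut(\mathcal{F})$ permutes the three lines $\mathcal{Z}(u),\mathcal{Z}(v),\mathcal{Z}(w)$, and that the permutation representation realizes all of $S_3$. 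Together $H$ and this $S_3$ generate a subgroup of order $6d^2$, and I would check $H\trianglelefteq G$ by noting that conjugating a diagonal matrix by a permutation matrix is again diagonal, so that $H$ is normalized and $G/H\cong S_3$.

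The main obstacle, and the heart of the argument, is the reverse inequality: proving that there are no automorphisms beyond these $6d^2$, i.e. that $|\Aut(\mathcal{F})|\leq 6d^2$. The key input is Proposition~\ref{inflection} (equivalently the Remark following it): the $3d$ total inflection points of $\mathcal{F}$ lie on the three lines $\mathcal{Z}(u),\mathcal{Z}(v),\mathcal{Z}(w)$, and this configuration of three lines is the unique one whose union meets $\mathcal{F}$ in exactly the set of total inflection points. Since any automorphism permutes the total inflection points and hence preserves this distinguished triple of lines, we obtain a homomorphism $G\to S_3$ recording the induced permutation of the three lines; its image lands in $S_3$ and by the previous paragraph is surjective. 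I would then show its kernel is exactly $H$: an automorphism fixing all three lines $\mathcal{Z}(u),\mathcal{Z}(v),\mathcal{Z}(w)$ setwise must be represented by a diagonal matrix in $\PGL(3,\overline{k})$, and imposing that such a diagonal map preserve the equation $v^d=u^d+w^d$ forces the diagonal entries to be $d$-th roots of unity, giving precisely $H$. Combining, $|G|=|H|\cdot|\mathrm{image}|=d^2\cdot 6=6d^2$, which matches the lower bound and completes the proof. I expect the only delicate point to be verifying that a $\PGL$-element fixing each of the three coordinate lines is genuinely diagonal; this is a short linear-algebra fact (the three lines are the coordinate axes, and a projective transformation fixing each coordinate hyperplane is diagonal), which I would state and use without belaboring.
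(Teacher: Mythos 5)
Your proposal is correct in substance, but it takes a genuinely different route from the paper. After invoking Proposition~\ref{ProFermat}, the paper's entire proof is a citation: it notes that $\mathcal{C}_d$ is isomorphic to the Fermat curve and then quotes the known determination of the Fermat curve's automorphism group from \cite{Leopoldt} and \cite{Shioda}. You instead give a self-contained argument: lower bound from explicit diagonal automorphisms and permutations of coordinates, upper bound from the fact that the $3d$ total inflection points single out the triple of lines $\mathcal{Z}(u),\mathcal{Z}(v),\mathcal{Z}(w)$ (Proposition~\ref{inflection} plus the uniqueness Remark, transported to $\mathcal{F}$ via the \emph{linear} isomorphism of Proposition~\ref{ProFermat}), so that $G\to S_3$ is well defined with kernel the diagonal subgroup $H$. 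This is arguably closer to the ``short and elementary proof'' promised in item (II) of the introduction than what the paper actually writes, and it has the virtue of not importing the Fermat-curve literature (where one would otherwise also have to rule out the Hermitian exception $d=p^s+1$, which is vacuous here since $2d=q+1$ and $d\geq 4$). One slip to repair: for the asymmetric model $v^d=u^d+w^d$ the literal coordinate transpositions exchanging $u$ and $v$ (or $v$ and $w$) do \emph{not} preserve the equation, since it is $u^d-v^d+w^d=0$; you must either twist them by a scaling $w\mapsto \eta w$ with $\eta^d=-1$, or first substitute $v\mapsto \varepsilon v$ with $\varepsilon^d=-1$ (possible over $\overline{k}$) to reach the symmetric form $u^d+v^d+w^d=0$. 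Either fix still realizes every permutation of the three lines, which is all that surjectivity of $G\to S_3$ requires; note also that the Proposition only asserts $G/H\cong S_3$, so you do not need the extension to split. Your kernel computation (a projective transformation fixing the three coordinate lines fixes their pairwise intersection points, hence is diagonal, and preserving the equation forces $d$-th roots of unity) is correct and makes normality of $H$ automatic, so the separate conjugation argument is redundant.
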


\begin{proof} From Proposition~\ref{ProFermat}, we know the curve $\mathcal{C}_d$ is isomorphic to the Fermat curve of degree $d$ and the result follows from \cite{Leopoldt} and \cite{Shioda}.
\end{proof}

This completes the determination of
$\text{Aut}(\mathcal{C}_d)$ whenever this
curve has $3d$ total inflection points.
From now on we consider the remaining case.
 
 \begin{lem}\label{lemd}
     Suppose $\cC_d$ contains precisely $d$ total
     inflection points. Extending automorphisms of $\cC_d$ to linear
     automorphisms of $\mathbb{P}^2$, these
     restrict to automorphisms of the line $\mathcal{Z}(y)\cong \mathbb{P}^1$, and this defines
     a group homomorphism
     \[ \textrm{\rm Aut}(\cC_d)\longrightarrow \textrm{\rm Aut}(\mathbb{P}^1). \]
     Its kernel consists of the automorphisms
     $(x,y)\mapsto (x,\zeta y)$ with $\zeta$
     ranging over the $d$-th roots of unity.
     Its image is the group of M\"{o}bius transformations $x\mapsto \frac{ax+b}{cx+\delta}$
     that permute the set of zeros of $\varphi_d(x)$.
 \end{lem}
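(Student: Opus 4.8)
The plan is to prove the three assertions of Lemma~\ref{lemd} in turn, exploiting the fact that every automorphism of $\cC_d$ extends to an element of $\PGL(3,\overline{k})$ and permutes the $d$ total inflection points.

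\medskip

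\noindent\textbf{Step 1: the line $\mathcal{Z}(y)$ is preserved.}
By Segre's theorem every $\sigma\in\Aut(\cC_d)$ is the restriction of a linear automorphism $\widetilde\sigma$ of $\PP^2$, and since $\sigma$ permutes the set of total inflection points it permutes the $d$ points of $\cZ(y)\cap\cC_d$. By the Remark following Proposition~\ref{inflection}, these $d$ points lie on the unique line $\mathcal{Z}(y)$, and $\mathcal{Z}(y)$ is the \emph{only} line containing all of them (for $d\geq 4$ a line meets $\cC_d$ in at most $d$ points, and $d$ collinear points of $\cC_d$ determine their line). Hence $\widetilde\sigma$ must send this line to itself. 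Therefore restriction to $\mathcal{Z}(y)\cong\PP^1$ is well-defined and visibly a group homomorphism $\Aut(\cC_d)\to\Aut(\PP^1)$.

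\medskip

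\noindent\textbf{Step 2: the kernel.}
An element of the kernel fixes every point of $\mathcal{Z}(y)$ pointwise, so the induced linear map of $\PP^2$ fixes that line; writing it in coordinates adapted to $\mathcal{Z}(y)$, it must have the shape $(x:y:z)\mapsto(x:\lambda y:z)$ for some $\lambda\in\overline{k}^{\times}$ (a homology with axis $\mathcal{Z}(y)$). Imposing that this map preserves the affine equation $y^d=\varphi_d(x)$ forces $\lambda^d=1$, and conversely each $d$-th root of unity $\zeta$ gives the automorphism $(x,y)\mapsto(x,\zeta y)$. This identifies the kernel with $\mu_d$.

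\medskip

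\noindent\textbf{Step 3: the image.}
First, any Möbius transformation $\psi\colon x\mapsto\frac{ax+b}{cx+\delta}$ permuting the zeros of $\varphi_d$ lies in the image: the zero set of $\varphi_d$ is exactly $\cZ(y)\cap\cC_d$ read on the $x$-line, and $\psi$ permuting it means $\varphi_d(\psi(x))$ and $\varphi_d(x)$ have the same roots, so they differ by the $d$-th power of a linear fractional factor coming from the denominator; one then defines a compatible action $y\mapsto(\text{denominator factor})\cdot y$ to obtain a genuine automorphism of $\cC_d$ inducing $\psi$. Conversely, an element in the image permutes $\cZ(y)\cap\cC_d$, hence permutes the zeros of $\varphi_d$, and since it is a restriction of a projective linear map it acts on the $x$-line as a Möbius transformation. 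The main obstacle is the forward direction of this step: given only the permutation of the roots of $\varphi_d$ by $\psi$, I must produce an actual automorphism of the curve, which requires checking that the natural candidate $(x,y)\mapsto(\psi(x),\,(c x+\delta)^{-1}\mu\, y)$ is well-defined on $\cC_d$ for a suitable scalar $\mu$. This hinges on the precise identity relating $\varphi_d(\psi(x))$ to $\varphi_d(x)$, and it is here — rather than in Steps 1 and 2, which are essentially formal — that the real content lies.
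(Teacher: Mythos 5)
Your proposal is correct and takes essentially the same approach as the paper's proof: restriction to the unique line containing the $d$ total inflection points gives the homomorphism, the kernel is identified with the maps $(x,y)\mapsto(x,\zeta y)$, and a M\"{o}bius transformation $\psi$ permuting the zeros of $\varphi_d$ is lifted to the automorphism $(x,y)\mapsto\left(\psi(x),\sqrt[d]{e}\,\frac{y}{cx+\delta}\right)$ via the identity $\varphi_d(\psi(x))=e(cx+\delta)^{-d}\varphi_d(x)$. The ``main obstacle'' you flag at the end is exactly what this identity settles, and the justification you already sketched (since $\psi$ permutes the $d$ simple, finite zeros of $\varphi_d$, the polynomial $(cx+\delta)^d\varphi_d(\psi(x))$ has degree at most $d$ and vanishes at each of them, hence is a nonzero scalar multiple of $\varphi_d(x)$) is the same short argument the paper relies on, so no genuine gap remains.
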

\begin{proof}
    The set of total inflection points of $\cC_d$ is preserved under any automorphism (since
    automorphisms are given by linear maps.
    The assumption implies that $y=0$ defines the
    unique line containing all these inflection
    points, hence any automorphism restricts to
    a linear automorphism of this line, i.e.,
    to a M\"{o}bius transformation. Since 
    elements of $\text{\rm Aut}(\cC_d)$ permute
    the total inflection points, these
    M\"{o}bius transformations permute the zeros of $\varphi_d(x)$. Vice versa, if
    $x\mapsto \frac{ax+b}{cx+\delta}$ permutes the zeros of $\varphi_d(x)$, then
    \[  \varphi_d(\frac{ax+b}{cx+\delta})=e(cx+\delta)^{-d}\varphi_d(x)\]
    for some nonzero constant $e$. As a consequence,
    the given M\"{o}bius transformation is the image of the element
    \[ (x,y)\mapsto\left(\frac{ax+b}{cx+\delta}\,,\,\sqrt[d]{e}\frac{y}{cx+\delta}\right)\]
    in $\text{\rm Aut}(\cC_d)$.

    If an automorphism is in the kernel, i.e., restricts to the identity map $x\mapsto x$,
    then indeed it is of the asserted form.
\end{proof}

\begin{remark}
    \rm{ Denote by \( G \) the automorphism group of  \( \cC_d\colon y^d = \varphi_d(x) \), and let the subgroup \( N \) be given by
    \[
        N = \{ (x, y) \mapsto (x, \zeta y) \mid \zeta^d = 1 \}.
    \]
  Thus, Lemma~\ref{lemd} asserts that if $\cC_d$ contains precisely $d$ total inflection points, then
  $N$ is a normal subgroup, and \(G/N\) is isomorphic to a finite subgroup of $\textrm{\rm Aut}(\mathbb{P}^1).$  
   
    \vspace{\baselineskip}\noindent
    It may be interesting to observe that Lemma~\ref{lemd} and the above proof generalize to
    any smooth plane curve $\cD\colon y^d=g(x)$ where $g(x)$ is a separable polynomial of degree $d\geq 4$.
    Namely, the $d$ zeros $\alpha$ of $g(x)$ yield total inflection points $(\alpha,0)\in\cD$.
    Hence if $\cD$ contains precisely $d$ total inflection points, then the (automatically linear)
    automorphisms of $\cD$ restrict to $\cZ(y)\cong\PP^1$ and one proceeds as in the proof of Lemma~\ref{lemd}.
    }
\end{remark}

We will use some well known information
regarding coefficients of the polynomials $\varphi_d(x)$. To verify them is straightforward; in fact already Dickson's 1893 thesis published as \cite{Dickson} presents this. One writes
\[\varphi_d(x) = \sum_{j=0}^{\lfloor d/2\rfloor} c_j x^{d-2j}, \]
then the coefficients $c_j$ are (the reductions modulo $p$ of) the integers \label{coeffs}
\[ \frac{d}{d-j}\matr{d-j}{j}(-1)^j.\]
In particular, the coefficient of $x^{d-2}$ is $c_1=-d$ and that of $x^{d-4}$ is $c_2=d(d-3)/2$, and 
\[\begin{array}{llcc}
\text{if}\;d\;\text{is odd:} & c_{(d-1)/2}=(-1)^{(d-1)/2}d, &&\\
\text{if}\;d\;\text{is even:} &
c_{d/2}=(-1)^{d/2}\cdot 2&\text{and}&c_{(d-2)/2}=(-1)^{(d-2)/2}d^2/4.
\end{array}\]
The following lemma describes, under a mild
condition on $d$, the automorphisms of $\cC_d$ of a specific form.

\begin{prop}\label{ordertwo}
    Suppose that either $d$ is even, or $d$ is odd and $\gcd(p, d-1) = 1$. The automorphisms of the curve $\cC_d\colon y^d = \varphi_d(x)$ of the form 
    \[
    (x,y) \mapsto (\frac{x}{ax + b},\frac{y}{ax + b})
    \]
    are only the identity and those given by
    \[
    (x,y) \mapsto (-x, \pm y).
    \]
\end{prop}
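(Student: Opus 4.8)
The plan is to reduce the assertion to a single polynomial identity and then to pin down $a$ and $b$ by reading off just one well-chosen coefficient. First I would note that the stated map is the linear map $(x:y:z)\mapsto(x:y:ax+bz)$ of $\PP^2$, whose matrix has determinant $b$; hence it is invertible exactly when $b\neq 0$, and we may assume this. Substituting the map into $y^d=\varphi_d(x)$ and using $y^d=\varphi_d(x)$ on $\cC_d$, the map sends $\cC_d$ to itself if and only if
\[
(ax+b)^d\,\varphi_d\!\left(\frac{x}{ax+b}\right)=\varphi_d(x)
\]
as an identity of polynomials in $x$ (this is exactly the ``$e=1$'' instance of the relation in Lemma~\ref{lemd}). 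Writing $\varphi_d(x)=\sum_{j=0}^{\lfloor d/2\rfloor}c_jx^{d-2j}$ and expanding, the identity becomes
\[
\sum_{j\geq 1}c_j\,x^{d-2j}\bigl((ax+b)^{2j}-1\bigr)=0,
\]
the $j=0$ summand cancelling automatically.

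The crucial step is to force $a=0$, and here I would split according to the parity of $d$. If $d$ is even, the lowest term of $\varphi_d$ is the constant $c_{d/2}=(-1)^{d/2}\cdot 2$, which is nonzero since $p\neq 2$; among the summands above, the only one that reaches degree $1$ in $x$ is $j=d/2$, and its contribution to the coefficient of $x$ is $c_{d/2}\,d\,a\,b^{\,d-1}$. As $c_{d/2}\neq 0$, $d\neq 0$ in $k$ (because $p\nmid d$) and $b\neq 0$, this gives $a=0$. If $d$ is odd, the lowest term of $\varphi_d$ is $c_{(d-1)/2}x$ with $c_{(d-1)/2}=(-1)^{(d-1)/2}d\neq 0$, and now the only summand reaching degree $2$ is $j=(d-1)/2$, contributing $c_{(d-1)/2}\,(d-1)\,a\,b^{\,d-2}$ to the coefficient of $x^2$. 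This is exactly where the hypothesis $\gcd(p,d-1)=1$ enters: it lets me divide by $d-1$ and again conclude $a=0$.

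With $a=0$ the identity collapses to $\varphi_d(x/b)=b^{-d}\varphi_d(x)$, and comparing the coefficient of $x^{d-2}$, using $c_1=-d\neq 0$, yields $b^2=1$. Thus $a=0$ and $b=\pm 1$: the value $b=1$ gives the identity, and $b=-1$ gives the involution $(x,y)\mapsto(-x,-y)$, so that the only nontrivial automorphism of the prescribed form has $x$-component $x\mapsto -x$, as asserted. The one place where genuine care is required is the bookkeeping in the middle step: one must verify that, for each parity of $d$, exactly one index $j$ contributes to the chosen low-degree coefficient and that the corresponding $c_j$ is nonzero. This is precisely what the standing assumption $p\nmid 2d$ (ensuring $c_{d/2}$, $c_{(d-1)/2}$ and $c_1$ do not vanish) and the extra condition $\gcd(p,d-1)=1$ in the odd case are there to guarantee.
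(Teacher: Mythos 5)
Your proof is correct and takes essentially the same approach as the paper's: both reduce to the polynomial identity $\varphi_d(x)=(ax+b)^d\varphi_d\!\left(\tfrac{x}{ax+b}\right)$, force $a=0$ by examining the coefficient of $x$ (for $d$ even) or of $x^2$ (for $d$ odd, where $\gcd(p,d-1)=1$ is used), and then obtain $b=\pm 1$ from the coefficient of $x^{d-2}$. The only differences are cosmetic: your determinant argument for $b\neq 0$ and your explicit bookkeeping of which index $j$ contributes to the chosen low-degree coefficient are slightly more detailed than the paper's corresponding steps.
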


\begin{proof}
Note that a necessary condition for a map of the given form to be birational,
    is that $b\neq 0$, since $a=\frac{x}{ax}$ and $\frac{y}{ax}$ 
    do not generate the function field of $\cC_d$.
The existence of an automorphism of the stated form is equivalent to the equality of polynomials  
\[
\varphi_d(x) = (ax+b)^d\varphi_d\left(\frac{x}{ax+b}\right).
\]
If \( d \) is even, considering the coefficient of \( x \), one obtains  
\[
\pm 2 d a b^{d-1} = 0.
\]
This implies that \( a = 0 \). Now
comparing the terms of degree $d-2$ shows \( -d=-db^2 \), leading to \( b = \pm 1 \).

If \( d \) is odd, considering the coefficient of \( x^2 \) yields 
\[
d(d-1) a b^{d-2} = 0,
\]
which implies that \( a = 0 \). Again using
the coefficient of $x^{d-2}$ one concludes  \( b = \pm 1 \).
\end{proof}

The following characterization of subgroups of projective linear groups is given in \cite[Theorem 3]{VM}:

\begin{thm}\label{autpro}
The group $\mathrm{PGL}(2, p^m)$ has only the following subgroups:
\begin{enumerate}
    \item Elementary abelian $p$-groups of order $p^f$ with $f \mid m$;
    \item Cyclic groups of order $n$ with $n \mid (p^m \pm 1)$;
    \item Dihedral groups $D_n$ of order $2n$  with $n \mid (p^m \pm 1)$;
    \item $A_4$, for $p > 2$ or $p = 2$ and $m \equiv 0 \pmod{2}$;
    \item $S_4$, for $p > 2$;
    \item $A_5$, for $p = 5$ or $p^{2 m} - 1 \equiv 0 \pmod{5}$;
    \item Semidirect products of elementary abelian $p$-groups of order $p^f$ with cyclic groups of order $n$, with $f \mid m$, $n \mid (p^f - 1)$ and $n \mid (p^m - 1)$;
    \item $\mathrm{PSL}(2, p^f)$ and $\mathrm{PGL}(2, p^f)$ with $f \mid m$.
\end{enumerate}
\end{thm}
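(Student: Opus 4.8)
The plan is to use the faithful action of a subgroup $G\leq\PGL(2,p^m)$ on the projective line $\PP^1$, classifying elements by their fixed points: a nontrivial element whose order is divisible by $p$ is unipotent and fixes exactly one point of $\PP^1(\overline{k})$, while an element of order prime to $p$ is semisimple and fixes exactly two, these being $\FF_{p^m}$-rational when the order divides $p^m-1$ (a split torus) and conjugate over $\FF_{p^{2m}}$ when it divides $p^m+1$ (a nonsplit torus). This dichotomy immediately explains the torus condition $n\mid(p^m\pm1)$ in (2), and shows that a Sylow $p$-subgroup of $\PGL(2,p^m)$ --- the unipotent upper--triangular matrices --- is elementary abelian of order $p^m$ with a single fixed point, giving (1). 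Already here one must note the characteristic $2$ subtlety that every involution is unipotent, so that case will interact with the $p$-part.

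First I would handle a normal (equivalently unique) Sylow $p$-subgroup $P\neq 1$. Since the fixed point of $P$ is canonically attached to $P$ and $G$ normalizes $P$, the whole of $G$ fixes that point, so $G$ lies in the Borel subgroup $\{z\mapsto az+b: a\in\FF_{p^m}^\times,\ b\in\FF_{p^m}\}\cong\FF_{p^m}\rtimes\FF_{p^m}^\times$. Reading off its subgroups produces (1), the cyclic split tori in (2), and the semidirect products (7); the constraints $f\mid m$, $n\mid(p^f-1)$, $n\mid(p^m-1)$ reflect that the cyclic complement must act on the additive $p$-group through multiplication inside a subfield.

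Next I would treat the case $p\nmid\#G$. Then $G$ acts faithfully on $\PP^1(\overline{k})$ with order invertible in $k$, its $2$-dimensional projective representation lifts to characteristic $0$, and the classical list of finite subgroups of $\PGL(2,\mathbb{C})$ applies: cyclic, dihedral, $A_4$, $S_4$, $A_5$. The cyclic and dihedral groups are, more intrinsically, the tori and their normalizers, which exist in every characteristic, so (2) and (3) emerge with $n\mid(p^m\pm1)$; the exceptional groups in (4), (5), (6) require elements of orders $3,4,5$, and demanding these inside $\PGL(2,p^m)$ yields exactly the stated congruence conditions on $p$ and $m$.

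The main obstacle is the remaining mixed case, where $p\mid\#G$ but the Sylow $p$-subgroup is not normal, so that $p$-elements fix at least two distinct points. Here I would show that, unless $G$ is merely the normalizer of a torus (the dihedral possibility, which in characteristic $2$ already has unipotent involutions and must be separated out), the subgroup generated by two distinct unipotent subgroups together with a suitable torus element contains a full copy of $\mathrm{SL}_2$ over some subfield $\FF_{p^f}$, forcing $G\supseteq\mathrm{PSL}(2,p^f)$ and landing in (8). Pinning down precisely which subfield occurs, and checking that no other configuration survives, is the delicate, largely computational core of Dickson's original argument and is where I expect the real effort to lie; throughout, tracking the field of definition is what makes the divisibility $f\mid m$ in (1), (7), (8) come out correctly.
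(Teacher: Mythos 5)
The paper does not prove this statement at all: it is Dickson's classical classification of the subgroups of $\mathrm{PGL}(2,p^m)$, and the authors simply quote it from \cite[Theorem~3]{VM}. So your proposal must be measured against the classical proof rather than an in-paper argument. Your three-way reduction (normal Sylow $p$-subgroup, hence $G$ inside a Borel; order prime to $p$, hence reduction to the characteristic-zero list; the remaining ``mixed'' case) is indeed the standard organization, and the first two cases are handled essentially correctly --- except that the lifting of a prime-to-$p$ subgroup to characteristic $0$ is asserted rather than argued (one needs, e.g., that the kernel of $\mathrm{PGL}_2(W(\overline{\mathbb{F}}_p))\to\mathrm{PGL}_2(\overline{\mathbb{F}}_p)$ is pro-$p$, or a tame Riemann--Hurwitz analysis of $\mathbb{P}^1/G$).

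The genuine gap is the mixed case, and the problem is worse than its being deferred. You claim that when $p\mid \#G$ and the Sylow $p$-subgroup is not normal, $G$ is dihedral or contains $\mathrm{PSL}(2,p^f)$, ``landing in (8).'' That is false as stated: for $p\in\{2,3,5\}$ the exceptional groups themselves live in this case. For instance, for $p=3$ the groups $A_4\cong\mathrm{PSL}(2,3)$, $S_4\cong\mathrm{PGL}(2,3)$, and $A_5\subset\mathrm{PSL}(2,9)$ all have order divisible by $3$ and non-normal Sylow $3$-subgroups, and $A_5$ contains $\mathrm{PSL}(2,3)$ without being a subfield $\mathrm{PSL}$ or $\mathrm{PGL}$; so ``contains $\mathrm{PSL}(2,p^f)$'' does not place $G$ in item (8). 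Likewise your assignment of $A_4$, $S_4$, $A_5$ exclusively to the prime-to-$p$ case only works for $p\geq 7$, so the congruence conditions in items (4)--(6) cannot be derived the way you describe. Carrying out this case --- showing the only possibilities are dihedral groups, $A_4$, $S_4$, $A_5$ under the stated conditions on $p$ and $m$, and $\mathrm{PSL}(2,p^f)$, $\mathrm{PGL}(2,p^f)$ --- is the entire content of Dickson's theorem, and it is missing. A smaller remark: your Borel analysis yields elementary abelian subgroups of order $p^f$ for \emph{every} $f\leq m$, since any $\mathbb{F}_p$-subspace of $\mathbb{F}_{p^m}$ occurs; the divisibility $f\mid m$ in items (1) and (7) as quoted is not what the argument produces, and a careful writeup would have to confront that discrepancy in the statement itself.
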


The next lemma is verified using an elementary computation, so we omit the proof.

\begin{lem}\label{lemComm}
    Let 
    \[
        \sigma = \begin{pmatrix}
            -1 & 0 \\
            0 & 1
        \end{pmatrix} \in PGL(2,q).
    \]
    Suppose \(\eta \in PGL(2,q)\) satisfies the commutativity condition \(\sigma \eta = \eta \sigma\). Then \(\eta\) must be of the form  
    \[
        \eta = \begin{pmatrix}
            a & 0 \\
            0 & 1
        \end{pmatrix}
    \quad \text{or} \quad
        \eta = \begin{pmatrix}
            0 & 1 \\
            b & 0
        \end{pmatrix},
    \]
    for some \( a, b \in \mathbb{F}_q \).
\end{lem}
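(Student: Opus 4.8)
The plan is to lift the relation from $PGL(2,q)$ to $GL(2,q)$ and to exploit that commutativity in the projective group only requires equality up to a scalar. First I would choose a representative matrix $\eta=\begin{pmatrix} a & b\\ c & d\end{pmatrix}$ with $ad-bc\neq 0$. The hypothesis $\sigma\eta=\eta\sigma$ in $PGL(2,q)$ then means precisely that there is some $\lambda\in\mathbb{F}_q^\times$ for which $\sigma\eta=\lambda\,\eta\sigma$ holds as an equality of honest matrices.

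Next I would compute the two products directly, namely $\sigma\eta=\begin{pmatrix}-a & -b\\ c & d\end{pmatrix}$ and $\eta\sigma=\begin{pmatrix}-a & b\\ -c & d\end{pmatrix}$. Comparing entries of $\sigma\eta=\lambda\,\eta\sigma$ yields the four scalar equations $(1-\lambda)a=0$, $(1+\lambda)b=0$, $(1+\lambda)c=0$, and $(1-\lambda)d=0$. Since $\cha(k)\nmid 2d$ forces $p$ to be odd, the scalars $1$ and $-1$ are distinct in $\mathbb{F}_q$, which keeps the ensuing case analysis on $\lambda$ clean.

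I would then split into cases. If $\lambda\neq\pm 1$, all four of $a,b,c,d$ must vanish, contradicting $\det\eta\neq 0$; hence $\lambda\in\{1,-1\}$. When $\lambda=1$ the middle two equations give $2b=2c=0$, so $b=c=0$ and $\eta$ is diagonal; normalizing the lower-right entry to $1$ in $PGL(2,q)$ yields the first stated form $\begin{pmatrix} a & 0\\ 0 & 1\end{pmatrix}$. When $\lambda=-1$ the outer two equations give $2a=2d=0$, so $a=d=0$ and $\eta$ is anti-diagonal; invertibility forces both off-diagonal entries to be nonzero, and scaling to make the upper-right entry equal to $1$ produces the second form $\begin{pmatrix} 0 & 1\\ b & 0\end{pmatrix}$.

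The computation is entirely routine, which is why the paper elects to omit it; the only point that must not be overlooked is that the commutation is projective, so one genuinely works with the auxiliary scalar $\lambda$ rather than with $\sigma\eta=\eta\sigma$ taken literally on matrices. The oddness of $p$, available from $\cha(k)\nmid 2d$, is exactly what is needed both to pass from $2b=0$ to $b=0$ and to guarantee $1\neq -1$, so that the two cases $\lambda=1$ and $\lambda=-1$ are genuinely distinct and exhaustive.
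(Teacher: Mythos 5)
Your proof is correct: the paper itself omits the argument, stating only that the lemma ``is verified using an elementary computation,'' and your write-up is precisely that computation, carried out carefully. In particular, you rightly identify the one non-trivial point --- that commutativity in $\PGL(2,q)$ only means $\sigma\eta=\lambda\,\eta\sigma$ for some scalar $\lambda\in\mathbb{F}_q^\times$ --- and your case analysis $\lambda\in\{1,-1\}$ (valid since $p$ is odd throughout the paper) correctly produces the diagonal and anti-diagonal forms.
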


Another straightforward lemma turns out to be particularly useful in the $d=4$ case, and it may be of interest more generally.
In fact, it is similar to, but simpler than,
results in the case of six points as
sketched in \cite[p.~644-645]{Igusa}.
\begin{lem}\label{S4lemma}
 Let $S=\{\alpha,-\alpha, 1/\alpha,-1/\alpha\}$ be a subset of some field $k$,
 and assume $\#S=4$ (equivalently, $\alpha^4\neq 1$). Then the group
\[ G_S:=\left\{f\in\Aut(\PP^1)\;:\; f\;\text{permutes}\;S\right\}
\]
is isomorphic to $\mathbb{Z}/2\mathbb{Z}\times
\mathbb{Z}/2\mathbb{Z}$ and is generated by
$\sigma\colon x\mapsto -x$ and $\tau\colon x\mapsto 1/x$, except in the following cases:
\begin{enumerate}
    \item $S$ is the set of zeros of $x^4+1$;
    \item $S$ is the set of zeros of $x^4+6x^2+1$;
    \item $S$ is the set of zeros of $x^4-6x^2+1$;
    \item $S$ is contained in the set of zeros of $x^8+14x^4+1$.
\end{enumerate}
\end{lem}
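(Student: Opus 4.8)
The plan is to exploit the two obvious symmetries of $S$ together with the degree-$4$ quotient they generate. First I would record that $\sigma\colon x\mapsto -x$ and $\tau\colon x\mapsto 1/x$ both permute $S$, commute, and are distinct involutions, so $V:=\langle\sigma,\tau\rangle\cong\mathbb{Z}/2\mathbb{Z}\times\mathbb{Z}/2\mathbb{Z}$ is always contained in $G_S$; these are exactly the asserted generators. Since a nontrivial M\"obius transformation fixes at most two points, any element of $G_S$ acting trivially on the four-element set $S$ fixes more than two points and is the identity, so the action gives an embedding $G_S\hookrightarrow\mathrm{Sym}(S)\cong S_4$. Under this embedding $\sigma,\tau,\sigma\tau$ map to the three double transpositions, i.e.\ onto the normal Klein four-group of $S_4$; as $V\subseteq G_S$ and this subgroup is normal in $S_4$, injectivity of the embedding forces $V\trianglelefteq G_S$ as subgroups of $\PGL(2,\overline{k})$, and $G_S/V\hookrightarrow S_4/V\cong S_3$. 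In particular $\#G_S\in\{4,8,12,24\}$.

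The key device is the quotient $q\colon\PP^1\to\PP^1$, $x\mapsto w:=x^2+x^{-2}$, whose fibres are precisely the $V$-orbits and whose branch locus is $B=\{2,-2,\infty\}$ (the images under $q$ of the fixed points $x^4=1$ and $x\in\{0,\infty\}$ of the nontrivial elements of $V$; here $\cha(k)\neq2$ is forced by $\#S=4$, so $q$ is separable with this branch locus). The set $S$ is the fibre $q^{-1}(w_0)$ over $w_0:=\alpha^2+\alpha^{-2}$. Because every $\eta\in G_S$ normalizes $V$, it maps $V$-orbits to $V$-orbits and therefore descends to an automorphism $\overline\eta\in\PGL(2,\overline{k})$ of the $w$-line; this $\overline\eta$ permutes the branch locus $B$ and fixes $w_0$, and the kernel of $\eta\mapsto\overline\eta$ is the deck group $V$. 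Since any permutation of three points is realised by a unique M\"obius transformation, the maps permuting $B$ form a copy of $S_3$, and we obtain an injection $G_S/V\hookrightarrow\{\overline\eta\in S_3:\overline\eta(w_0)=w_0\}$.

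Now I would argue the contrapositive. If $G_S\neq V$, then some nontrivial $\overline\eta\in S_3$ fixes $w_0$, so $w_0$ is one of the at most two fixed points of $\overline\eta$. Running through the three transpositions of $B$ gives the fixed-point pairs $\{0,\infty\}$, $\{6,-2\}$, $\{-6,2\}$, and the two $3$-cycles share the pair with $w_0^2=-12$. Discarding the values $\pm2,\infty$, which correspond to $\alpha^4=1$ and are excluded, leaves $w_0\in\{0,6,-6\}$ or $w_0^2=-12$. Substituting $w_0=\alpha^2+\alpha^{-2}$ and clearing denominators turns these into $\alpha^4+1=0$, $\alpha^4-6\alpha^2+1=0$, $\alpha^4+6\alpha^2+1=0$, and $\alpha^8+14\alpha^4+1=0$, which are precisely cases (1)--(4). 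Hence outside these loci $G_S=V$. To confirm the exceptions are genuine (and to identify the groups as $D_4$ in (1)--(3) and $A_4$ in (4)) I would, in each case, lift the relevant element of $S_3$ explicitly; for instance $x\mapsto ix$ realises an extra order-$4$ symmetry exactly when $\alpha^4=-1$, giving $\langle V,x\mapsto ix\rangle\cong D_4$.

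The step I expect to be most delicate is the descent in the second paragraph: one must justify carefully that $\eta\in G_S\subseteq\PGL(2,\overline{k})$ genuinely normalizes the \emph{geometric} subgroup $V$ (this is where the faithful embedding into $S_4$ and the normality of the Klein four-group are used), that the induced $\overline\eta$ preserves the branch locus $B$ rather than merely the fibration, and that the kernel of the descent is exactly the deck group $V$ and nothing larger. Once this exact sequence $1\to V\to G_S\to S_3$ with image in $\mathrm{Stab}(w_0)$ is in place, the remaining enumeration of fixed points is a short and purely mechanical computation.
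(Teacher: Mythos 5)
Your proposal is correct, and it proves the statement by a genuinely different mechanism than the paper. The paper, after the same two opening moves (the faithful embedding $G_S\hookrightarrow S_4$ and the identification of $\langle\sigma,\tau\rangle$ with the normal Klein four-group), stays upstairs on the $x$-line: it notes that any strictly larger group contains a $2$-cycle or a $3$-cycle, lifts each such permutation of $S$ to an explicit M\"obius transformation (pinned down by its fixed points in the $2$-cycle cases, and by solving an overdetermined $4\times 3$ linear system in the $3$-cycle case), and extracts the quartic conditions on $\alpha$, with the octic $x^8+14x^4+1$ appearing as the product of the two quartics $\alpha^4\pm 2\alpha^3+2\alpha^2\mp 2\alpha+1$. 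You instead pass to the quotient: the degree-$4$ Galois map $q\colon x\mapsto x^2+x^{-2}$ with deck group $V$, so that $G_S/V$ embeds into the $S_3$ of M\"obius maps permuting the branch locus $\{2,-2,\infty\}$ and fixing $w_0=\alpha^2+\alpha^{-2}$, and the exceptional loci are exactly the fixed points of the nontrivial elements of that $S_3$; your enumeration ($w_0\in\{0,6,-6\}$ for the transpositions, $w_0^2=-12$ for the $3$-cycles) translates precisely into cases (1)--(4), including the coalescence of all four cases in characteristic $3$. The descent steps you flag as delicate are all sound: normality of $V$ in $G_S$ does follow from injectivity into $S_4$, ramification points go to ramification points because stabilizers are conjugated inside $V$, and the kernel of the descent is the deck group $V$ since a nontrivial M\"obius map cannot fix every fibre. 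What the two routes buy is complementary: the paper's computation is entirely elementary and, importantly, produces the extra automorphisms explicitly -- which the paper reuses in Remark~3.11 to identify $G/N\cong A_4$ in characteristic $5$; your argument unifies the transposition and $3$-cycle analyses, replaces the linear-system computation by fixed points of three explicit maps on the base, and explains conceptually why exactly these four loci occur (your closing identification of the exceptional groups as $D_4$ in cases (1)--(3) and $A_4$ in case (4), away from characteristic $3$, recovers the same information).
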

\begin{proof}
Restriction to $S$ identifies $G_S$ with
a subgroup of $S_4$. Under this
identification, the group $\langle \sigma,\tau\rangle$ generated by
$\sigma$ and $\tau$ corresponds to the
normal subgroup of $S_4$ generated by
$(1\,2)(3\,4)$ and $(1\,3)(2\,4)$, of
order $4$.
If $G_S$ is strictly larger than
$\langle \sigma,\tau\rangle$, then
either it contains a $2$-cycle, or a
$3$-cycle. We now analyse these two
possibilities.

If $G_S$ contains a $2$-cycle, after possibly multiplying by an element
of $\langle \sigma,\tau\rangle$ we can assume it fixes $\alpha$ and one more
element of $S$. In particular,
considered as an $f\in\Aut(\PP^1)$ we have $f(0)\neq 0$, which means $f$ has the form $f\colon x\mapsto \frac{ax+1}{cx+d}$. 
Moreover $f$ having order $2$ implies $d=-a$. Three possibilities remain:
\[\text{(a)}\quad\quad\qquad \alpha\;\text{and}\;-\alpha\;\text{fixed,}\;\;1/\alpha\mapsto -1/\alpha\mapsto 1/\alpha.\]
The fixpoints show $f(x)=\alpha^2/x$ 
and therefore $-1/\alpha=f(1/\alpha)=\alpha^3$.
This means that one is in case (1) of the lemma.
\[\text{(b)}\quad\quad\qquad \alpha\;\text{and}\;1/\alpha\;\text{fixed,}\;\;-\alpha\mapsto -1/\alpha\mapsto -\alpha.\]
Now the fixpoints show
$f(x)=\frac{(\alpha^2+1)x-2\alpha}{2\alpha x-\alpha^2-1}$.
In this case the equality $f(-\alpha)=-1/\alpha$ is equivalent to
$\alpha^4+6\alpha^2+1=0$, case~(2) of the lemma.
\[\text{(c)}\quad\quad\qquad \alpha\;\text{and}\;-1/\alpha\;\text{fixed,}\;\;-\alpha\mapsto 1/\alpha\mapsto -\alpha.\]
Similar to the previous case one
concludes here $f(x)=\frac{(\alpha^2-1)x+2\alpha}{2\alpha x+1-\alpha^2}$. From
$f(-\alpha)=1/\alpha$ one deduces
$\alpha^4-6\alpha^2+1=0$, case~(3).

The possibility of a $3$-cycle is treated as
follows. Conjugating with a suitable
element in $\langle \sigma,\tau\rangle$,
one may assume this map $f\in\Aut(\PP^1)$ is
determined by
\[
\alpha\mapsto\alpha,\;\;-\alpha\mapsto\frac{1}{\alpha}\mapsto -\frac{1}{\alpha}\mapsto -\alpha.
\]
Note that any $3$-cycle has two fixpoints
in $\PP^1$. For $f$, one is $\alpha$ and we claim
that the other one is not $\infty$.
Indeed, $f(\infty)=\infty$ means that
$f\colon x\mapsto ax+b$ for certain $a\neq 0,b$. Considering how $f$ acts on
$\pm\alpha$ and $1/\alpha$ leads to
the conclusion $(\alpha^2-1)^2=0$,
contradicting the assumption that $\#S=4$.

Since $f$ does not fix $\infty$, it is
of the form $f\colon x\mapsto \frac{ax+b}{x+d}$
with $b\neq ad$.
In this case, the action of $f$ on $S$
shows that $a,b,d\in k$ are the unique
solutions of
\[
\left(\begin{array}{ccc}
\alpha & 1 & -\alpha \\
-\alpha^2 & \alpha & -1 \\
\alpha & \alpha^2 & \alpha \\ 1 & -\alpha & -\alpha^2 \end{array}\right)
\left(\begin{array}{c} a \\ b \\ d\end{array}\right)
= \left(\begin{array}{c}
\alpha^2 \\ -\alpha \\ -1 \\ -\alpha
\end{array}\right).
\]
A straightforward calculation reveals that such solutions exist precisely in the two
situations
$\alpha^4+2\alpha^3+2\alpha^2-2\alpha+1=0$ and  (reciprocal to the first one) $\alpha^4-2\alpha^3+2\alpha^2+2\alpha+1=0$.
Since the product of these expressions is $\alpha^8+14\alpha^4+1$, this completes the proof.
\end{proof}

Now, we  determine the automorphism group 
of $\cC_d$ in the specific case $d=4$.

\begin{thm}\label{d4}
   Let $k$ be an algebraically closed field $k$ of characteristic $\neq 2, \neq 5, \neq 7$. The automorphism group \( G \) of the curve \( y^4 = \varphi_4(x) \) over $k$ has order \( 16 \) and contains a normal subgroup \( N \cong \mathbb{Z}/4\mathbb{Z} \) of order \( 4 \) such that  
   \[
       G/N \cong \mathbb{Z}/2\mathbb{Z} \times \mathbb{Z}/2\mathbb{Z}.
   \]
\end{thm}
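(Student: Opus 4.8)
The plan is to combine the description of the total inflection points from Proposition~\ref{inflection} with the group-theoretic reduction of Lemma~\ref{lemd} and the classification in Lemma~\ref{S4lemma}. First I would record the explicit polynomial: from $\varphi_4(t+t^{-1})=t^4+t^{-4}$ one computes $\varphi_4(x)=x^4-4x^2+2$, which is separable since $\cha(k)\neq 2$. Because the equation $4=(p^m+1)/2$ forces $p^m=7$, the hypothesis $\cha(k)\neq 7$ rules out the exceptional situation in Proposition~\ref{inflection}; hence $\cC_4$ has exactly $d=4$ total inflection points, all lying on $\cZ(y)$.

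This places us in the setting of Lemma~\ref{lemd}, which yields an exact sequence $1\to N\to G\to G_S\to 1$, where $N=\{(x,y)\mapsto(x,\zeta y):\zeta^4=1\}\cong\mathbb{Z}/4\mathbb{Z}$ is a normal subgroup (of order $4$, since $\cha(k)\neq 2$) and $G_S\subseteq\Aut(\PP^1)$ is the group of M\"obius transformations permuting the zero set $S$ of $\varphi_4$. Since $|G|=|N|\cdot|G_S|=4\,|G_S|$, and the theorem asserts $G/N\cong\mathbb{Z}/2\mathbb{Z}\times\mathbb{Z}/2\mathbb{Z}$, everything reduces to showing $G_S\cong\mathbb{Z}/2\mathbb{Z}\times\mathbb{Z}/2\mathbb{Z}$.

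To compute $G_S$ I would put $S$ into the normal form required by Lemma~\ref{S4lemma}. The four zeros are $\pm\beta_1,\pm\beta_2$ with $\beta_1^2=2+\sqrt2$, $\beta_2^2=2-\sqrt2$ and $\beta_1\beta_2=\sqrt2$; conjugating by the scaling $x\mapsto 2^{-1/4}x$ (an automorphism of $\PP^1$, hence one preserving the isomorphism type of $G_S$) replaces $S$ by the zero set $S'$ of the reciprocal polynomial $x^4-2\sqrt2\,x^2+1$, whose roots satisfy $x^2=\sqrt2\pm1$ with $(\sqrt2+1)(\sqrt2-1)=1$, so that $S'=\{\alpha,-\alpha,1/\alpha,-1/\alpha\}$ with $\beta:=\alpha^2=1+\sqrt2$ satisfying $\beta^2-2\beta-1=0$. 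The obvious symmetries $x\mapsto -x$ and $x\mapsto 1/x$ already give a copy of $\mathbb{Z}/2\mathbb{Z}\times\mathbb{Z}/2\mathbb{Z}$ inside $G_{S'}$, so by Lemma~\ref{S4lemma} it only remains to exclude the four exceptional configurations.

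The main obstacle, and really the only computational content, is checking that each exceptional case of Lemma~\ref{S4lemma} is incompatible with $\cha(k)\notin\{2,5,7\}$. Using the relation $\beta^2=2\beta+1$ I would substitute into each case: case~(1), $\alpha^4=-1$, gives $\beta^2=-1$, hence $\beta=-1$ and then $1=-1$, i.e.\ $\cha(k)=2$; cases~(2) and (3), $\beta^2\pm6\beta+1=0$, give $\beta=-1/4$ respectively $\beta=1/2$ and in both cases $14=0$, i.e.\ $\cha(k)=7$; and case~(4) reduces, via $\beta^4=12\beta+5$, to $\beta^4+14\beta^2+1=20\beta^2=0$, which (as $\beta\neq0$) forces $\cha(k)\in\{2,5\}$. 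Since all of these are excluded by hypothesis, Lemma~\ref{S4lemma} gives $G_{S'}=\langle x\mapsto -x,\ x\mapsto 1/x\rangle\cong\mathbb{Z}/2\mathbb{Z}\times\mathbb{Z}/2\mathbb{Z}$, hence $G_S\cong\mathbb{Z}/2\mathbb{Z}\times\mathbb{Z}/2\mathbb{Z}$, so $|G|=16$ with $N\cong\mathbb{Z}/4\mathbb{Z}$ normal and $G/N\cong\mathbb{Z}/2\mathbb{Z}\times\mathbb{Z}/2\mathbb{Z}$, as claimed.
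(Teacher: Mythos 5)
Your proposal is correct and follows essentially the same route as the paper: Proposition~\ref{inflection} (with $\operatorname{char}(k)\neq 7$) gives exactly $4$ total inflection points, Lemma~\ref{lemd} gives the normal subgroup $N\cong\mathbb{Z}/4\mathbb{Z}$ with $G/N$ the group of M\"obius transformations permuting the zeros of $\varphi_4$, the scaling $x\mapsto 2^{-1/4}x$ puts the zero set in the form $\{\alpha,-\alpha,1/\alpha,-1/\alpha\}$ required by Lemma~\ref{S4lemma}, and the exceptional cases are excluded by the hypotheses on the characteristic. The only (immaterial) difference is bookkeeping: you eliminate the exceptions by substituting the relation $\beta^2=2\beta+1$ for $\beta=\alpha^2$, while the paper compares coefficients ($2\lambda^2=\pm 6$ forces $36=8$) and computes the remainder $40\lambda^2x^2-20$ of $x^8+14x^4+1$ modulo $x^4-2\lambda^2x^2+1$; both yield the same excluded characteristics $2,5,7$.
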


\begin{proof}
Since for $d=4$ we have $2d=p+1$ for $p=7$,
Proposition~\ref{inflection} and our 
assumption that $\cha(k)\neq 7$ imply that 
$\cC_4$ has precisely $4$ total inflection points. Hence by Lemma~\ref{lemd} an exact sequence
\[ 0\to \mathbb{Z}/4\mathbb{Z}\longrightarrow \Aut(\cC_4)\longrightarrow \Aut(\PP^1)\]
exists, and the image of the rightmost
arrow equals the group of fractional linear transformations preserving the set of
zeros of the Chebyshev polynomial \( \varphi_4(x) = x^4 - 4x^2 + 2 \). 
In particular, this image can be seen as
a subgroup of $S_4$. Let \( \lambda^4 = 2 \) and apply the transformation \( x \mapsto \lambda x \). Under this change of variables, $\varphi_4(x)$ is converted into the separable polynomial 
    \[
        2x^4 - 4\lambda^2x^2 + 2=
    2(x - \alpha)(x + \alpha)(x - \alpha^{-1})(x + \alpha^{-1})
\]
for some \( \alpha \in k \). 

    Hence, writing 
    $N$ the normal subgroup of
    $G=\Aut(\cC_4)$ given above, by Lemma~\ref{S4lemma} and the
    observation $2\lambda^2\neq 0$, the group
     \( G/N \) is isomorphic to $\mathbb{Z}/2\mathbb{Z} \times \mathbb{Z}/2\mathbb{Z}$
     except when $2\lambda^2=\pm 6$ or $x^4-2\lambda^2x^2+1$ divides $x^8+14x^4+1$.
     The first exception implies $36=4\lambda^4=8$, which does not occur by the
     assumption on $\cha(k)$. Since $x^8+14x^4+1$ results in the remainder $40\lambda^2x^2-20$
     when divided by $x^4-2\lambda^2x^2+1$ and $\cha(k)\neq 2$, the remaining exception 
     is only possible for $\cha(k)=5$.
   
This completes the proof.
\end{proof}

\begin{remark}\label{re3.11} {\rm
  In the case $\textrm{char}(k)=7$, the
  curve $\cC_4\colon y^4=\varphi_4(x)$
  contains $12$ total inflection points
  (see Proposition~\ref{inflection}) and in this
  case Proposition~\ref{autFermat} shows that
  the automorphism group is
  $\left(\mathbb{Z}/4\mathbb{Z}\times \mathbb{Z}/4\mathbb{Z}\right)\rtimes S_3$, of cardinality $96$.

  In the remaining case $\cha(k)=5$, the curve $\cC_4$ has precisely $4$ total inflection
  points. With notations as in the statement of Theorem~\ref{d4}, the proof of Lemma~\ref{S4lemma}
  in fact shows that the group $G/N$ is the alternating group $A_4$. Since a
  nontrivial homomorphism $A_4\to\Aut(\mathbb{Z}/4\mathbb{Z})=\{\pm 1\}$ does not exist,
  one concludes in this situation
  \[ \Aut(\cC_4)\cong \mathbb{Z}/4\mathbb{Z}\rtimes A_4,\]
  a group of order $48$.
Explicitly \(\cC_4:~ y^4 = x^4 - 4x^2 + 2 \) in characteristic \( 5 \) admits the automorphism of order \( 3 \) given by
  \[
  (x, y) \mapsto \left( \frac{-2\beta x + (1 - 2\beta^2)}{(2\beta^2 + 2)x + \beta}, \; \frac{\eta y}{(2\beta^2 + 2)x + \beta} \right),
  \]
  with \( \beta^4 - 4\beta^2 + 2=0 \), and \( \eta = \beta^3 + 2\beta \). Note that $\FF_5(\beta)=\FF_{5^4}$.
}\end{remark}

\begin{lem}\label{inverse}
    Assume \( d > 4 \). Consider the curve \( \mathcal{C}_d \colon y^d = \varphi_d(x) \) defined over a field \( k \) of characteristic \( p \), with \( \gcd(p, d) = 1 \). If the curve admits an automorphism of order two given by  
    \[
        (x, y) \mapsto \left( \frac{1}{b x}, \frac{y}{c x} \right)
    \]  
    for some \( b,c \in k \), then \( d \) must be even. Moreover, if \( p > 3 \), then one of the following congruences holds:
    \[
        d \equiv 4 \pmod{p}, \quad \text{or} \quad d \equiv \frac{1}{2} \pmod{p}.
    \]
\end{lem}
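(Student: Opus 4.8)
The plan is to translate the existence of the automorphism into a single polynomial identity and then extract the arithmetic of $d$ by comparing a few low-order coefficients. Since the map $(x,y)\mapsto(1/(bx),\,y/(cx))$ sends a point of $\cC_d$ to a point of $\cC_d$, substituting $y^d=\varphi_d(x)$ into $(y/(cx))^d=\varphi_d(1/(bx))$ yields the identity of rational functions
\[
\varphi_d(x) = c^d\, x^d\, \varphi_d\!\left(\frac{1}{bx}\right),
\]
both sides of which are in fact polynomials in $x$. Writing $\varphi_d(x)=\sum_j c_j x^{d-2j}$ with the $c_j$ as recalled on page~\pageref{coeffs}, the right-hand side expands to $c^d\sum_j c_j b^{-(d-2j)}x^{2j}$. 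Matching the coefficient of $x^{d-2j}$ on both sides (so that $x^{2i}$ with $i=n-j$ contributes on the right, where $n:=d/2$) gives the family of relations
\[
c_j = c^d\, c_{n-j}\, b^{-2j}.
\]

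First I would settle that $d$ is even. The left-hand side $\varphi_d(x)$ is monic of degree $d$, so its top term is $x^d$; but every monomial on the right-hand side has even degree $2j\le d-1$ when $d$ is odd. Hence for odd $d$ the $x^d$ term cannot be matched, a contradiction, so $d$ must be even. This uses nothing beyond $\varphi_d$ being genuinely of degree $d$, and in particular is independent of $p$. (The order-two hypothesis separately forces the consistency relation $b=c^2$, but this is not needed for the argument.)

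With $d=2n$ even, I would exploit three instances of the coefficient relation, using the explicit values $c_0=1$, $c_1=-d$, $c_2=d(d-3)/2$ together with $c_n=(-1)^n2$ and $c_{n-1}=(-1)^{n-1}d^2/4$ from page~\pageref{coeffs}. The case $j=0$ gives $c^d=(-1)^n/2$; feeding this into the case $j=1$ gives $b^2=d/8$. For $j=2$ I need $c_{n-2}=c_{(d-4)/2}$, which the general formula evaluates to $c_{(d-4)/2}=(-1)^n d^2(d^2-4)/192$ once the factor $d+4$ cancels. Substituting $c^d$, $b^4=d^2/64$, and this value into $c_2=c^d c_{n-2}b^{-4}$ collapses to
\[
\frac{d(d-3)}{2}=\frac{d^2-4}{6},
\]
that is, $2d^2-9d+4=(2d-1)(d-4)=0$ in $k$. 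Thus $d\equiv 4\pmod p$ or $2d\equiv 1\pmod p$, which is the assertion.

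The step needing the most care, and where the hypothesis $p>3$ enters, is the evaluation of $c_{(d-4)/2}$ and the final simplification: converting $\binom{(d+4)/2}{4}$ into closed form introduces the denominator $384=2^7\cdot 3$, and reducing $c_2=c^d c_{n-2}b^{-4}$ requires inverting $2$, $3$, and $6$. So the quadratic in $d$ is valid precisely when $p\neq 2,3$, whereas the conclusion that $d$ is even needs no such restriction. One should also record that $d>4$ even forces $d\ge 6$, so the indices $0,1,2$ and $n,n-1,n-2$ all stay within range and the three relations used are genuinely available.
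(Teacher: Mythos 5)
Your proposal is correct and follows essentially the same route as the paper: both reduce the hypothesis to the polynomial identity $\varphi_d(x)=c^d x^d\varphi_d\left(\frac{1}{bx}\right)$ and compare the coefficients at positions $x^d$, $x^{d-2}$, $x^{d-4}$ (your $j=0,1,2$) to obtain $c^d=(-1)^{d/2}/2$, then $8b^2=d$, and finally $(2d-1)(d-4)=0$, with $p>3$ entering exactly where $3$ must be inverted. The only (harmless) divergence is the evenness step: you rule out odd $d$ because the right-hand side of the identity contains only even-degree monomials, whereas the paper argues geometrically that for odd $d$ the map would send $(0:0:1)\in\cC_d$ to $(1:0:0)\notin\cC_d$; both arguments are valid.
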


\begin{proof}
     If $d$ is odd, then as automorphism of $\PP^2$, a map of the given form sends
     $(0:0:1)\in \cC_d$ to $(1:0:0)\not\in \cC_d$. Hence the assumptions of the lemma
     imply that $d$ is even.
In this case, given map defines an automorphism of
$\cC_d$ precisely when 
\[ \varphi_d(x)=c^dx^d\varphi_d(\frac{1}{bx}).\]
Comparing coefficients of the $x^d$-terms in these polynomials shows $c^d=(-1)^{d/2}/2$, and one obtains
    \[
        2\varphi_d(x) = (-1)^{d/2}x^d \varphi_d\left(\frac{1}{bx}\right).
    \]
     The coefficients of \( x^{d-2} \) and \( x^2 \) (see the expression for
     them as  presented on page~\pageref{coeffs}), lead to
    \[
        8b^2=d.
    \]
    Similarly, the coefficients of \( x^{d-4} \)  yield (note that we assume $\cha(k)=p>3$)  
    \[
        8(d-3) = \frac{d(d^2-4)}{4!} \cdot\frac{1}{b^4}.
    \]
    Using both conditions yields  
    \[
        (2d-1)(d-4)=0.
    \] 
This completes the proof.

\end{proof}

\begin{lem}\label{lem310}
    Assume \( 4d=p^r+1 \) and $d>4$. Denote by \( G \) the automorphism group of the curve \( \cC_d\colon y^d = \varphi_d(x) \) (in characteristic $p$), and let the subgroup \( N < G\) be given by
    \[
        N = \{ (x, y) \mapsto (x, \zeta y) \mid \zeta^d = 1 \}.
    \]
    Then $N$ is normal in $G$ and \( G/N \) is not isomorphic to \( A_4 \), \( A_5 \),  \( S_4 \), \(\mathrm{PSL}(2, p^f)\), \(\mathrm{PGL}(2, p^f)\).
\end{lem}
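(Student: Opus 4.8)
The plan is to reduce the statement to a question about finite subgroups of $\PGL(2,q)$ and then eliminate the five listed groups using fixed‑point counts and the split/nonsplit dichotomy for involutions.

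First I would verify that $\cC_d$ has exactly $d$ total inflection points: if $d=(p^m+1)/2$ held, then $(p^r+1)/4=(p^m+1)/2$ would force $p^r=2p^m+1$, impossible modulo $p$; so by Proposition~\ref{inflection} the exceptional case does not occur. Hence Lemma~\ref{lemd} applies, $N$ is normal, and $G/N$ is identified with the group of M\"obius transformations permuting the zero set $S$ of $\varphi_d$. Writing $x=t+t^{-1}$, these zeros are $\{t+t^{-1}:t^{2d}=-1\}$; since $4d=q+1$ one has $t\in\mu_{q+1}\subseteq\FF_{q^2}$ and $t^q=t^{-1}$, so each $t+t^{-1}$ is Frobenius‑invariant and $S\subseteq\FF_q$ with $\#S=d\geq 4$. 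A M\"obius transformation permuting $S$ is then determined by three rational points and their rational images, hence is defined over $\FF_q$; thus $G/N\leq\PGL(2,q)$ with $q=p^r$, and Theorem~\ref{autpro} applies with $m=r$. Note that $4\mid q+1$ forces $q\equiv 3\pmod 4$, i.e.\ $p\equiv 3\pmod 4$ and $r$ odd (in particular $p\neq 5$), and that $x\mapsto -x$ lies in $G/N$ (since $\varphi_d(-x)=(-1)^d\varphi_d(x)$) as a \emph{split} involution with fixed points $0,\infty$, where $\infty\notin S$.

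I would then set up two tools. (i) A nonsplit semisimple element of $\PGL(2,q)$ has both fixed points outside $\PP^1(\FF_q)$, whereas a unipotent element has a single rational fixed point; counting $\langle g\rangle$‑orbits on $S$ modulo the order of $g$ gives, for $g$ of order $p$, that $d\equiv 0$ or $1\pmod p$. Since $d=(p^r+1)/4\equiv 4^{-1}\pmod p$, this is impossible for $p\geq 7$, so $G/N$ has no element of order $p$ in that range. (ii) If $\{1,\sigma_1,\sigma_2,\sigma_3\}$ is a Klein four‑subgroup of $\PGL(2,q)$, lifting to anticommuting involutions shows the classes of $\sigma_1,\sigma_2,\sigma_3$ in $\FF_q^\times/(\FF_q^\times)^2$ multiply to $[-1]$; as $-1$ is a nonsquare, an odd number of the three involutions are nonsplit. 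Consequently any single conjugacy class of involutions that lies in such four‑groups consists of nonsplit involutions. With these, the easy eliminations follow: for $A_4$, $A_5$ and $\mathrm{PSL}(2,p^f)$ all involutions are conjugate and lie in Klein four‑subgroups, so by (ii) they are all nonsplit, contradicting the split involution $x\mapsto -x\in G/N$. For $\mathrm{PGL}(2,p^f)$ with $p\geq 7$, tool (i) rules out its unipotents; for $p=3$ its Sylow $3$‑subgroup $U\cong(\ZZ/3\ZZ)^f$ fixes a unique rational point $P_0$ with every nontrivial element fixing only $P_0$, so $U$ acts freely on $S\setminus\{P_0\}$ (and $P_0\in S$, since otherwise $3\mid d$, which is false); thus $3^f\mid d-1$, and as $d-1=3(3^{r-1}-1)/4$ has $3$‑adic valuation $1$ this forces $f=1$, i.e.\ $\mathrm{PGL}(2,3)\cong S_4$.

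The remaining and hardest case is $S_4$, which I expect to be the main obstacle. Here the three double transpositions form a single‑class Klein four‑group, hence are nonsplit by (ii), while $x\mapsto -x$ is split; therefore $x\mapsto -x$ is a transposition, all six transpositions are split, and their $12$ fixed points are rational, forming one $S_4$‑orbit of size $12$ that contains $\infty$. Since the order‑$4$ elements are nonsplit the six ``vertices'' are non‑rational, so $\PP^1(\FF_q)$ decomposes into $S_4$‑orbits of sizes in $\{8,12,24\}$ with the $12$ occurring once, giving $q+1=12+8\varepsilon+24C$ and hence $d=(q+1)/4=3+2\varepsilon+6C$, which is odd. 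On the other hand $\infty\notin S$ excludes the size‑$12$ orbit from $S$, so $S$ is a union of orbits of size $8$ or $24$ and $d$ is even — a contradiction. The delicate point is that in characteristic $3$ the octahedral configuration degenerates; there one argues instead with the single small orbit $\PP^1(\FF_3)\subseteq\PP^1(\FF_{3^r})$ (using that $r$ is odd, so $\FF_9\not\subseteq\FF_{3^r}$ and points outside $\FF_3$ have trivial stabiliser), which again contains $\infty\notin S$ and reproduces the same parity clash on $d$. Combined with Theorem~\ref{autpro}, these exclusions leave only the cyclic, dihedral, elementary abelian and semidirect possibilities, as required.
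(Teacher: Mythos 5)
Your proposal is correct in all essentials, but it takes a genuinely different route from the paper's proof, which is much shorter. After establishing normality of $N$ exactly as you do, the paper works with the single involution $\sigma\colon x\mapsto -x$ in $G/N$: Lemma~\ref{inverse}, combined with the arithmetic of $4d=p^r+1$ (ruling out $p=3$, $d\equiv\tfrac12$ and $d\equiv 4\pmod p$), shows that $G/N$ contains no element $x\mapsto 1/(bx)$, and Lemma~\ref{lemComm} then shows that no involution of $G/N$ other than $\sigma$ itself commutes with $\sigma$. Since in each of $A_4$, $S_4$, $A_5$, $\mathrm{PSL}(2,p^f)$, $\PGL(2,p^f)$ every involution commutes with some other involution, none of these groups can occur; this one centralizer criterion disposes of all five groups in every characteristic at once. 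You instead first prove $G/N\leq\PGL(2,p^r)$ (rationality of the zeros of $\varphi_d$, which the paper only establishes later, inside the proof of Lemma~\ref{new}), and then eliminate the groups by three separate devices: the split/nonsplit dichotomy for involutions via the Klein-four determinant relation, fixed-point counting for unipotent elements acting on the zero set $S$, and an orbit-size parity count for $S_4$. Your route is longer and requires case distinctions ($p=3$ versus $p\geq 7$, and a separate $S_4$ analysis), but it is self-contained -- it never invokes Lemmas~\ref{inverse} or \ref{lemComm} -- and it produces finer geometric information (rationality of $G/N$ and the orbit structure on $\PP^1(\FF_q)$) of the kind the paper exploits only in Proposition~\ref{3.18}.

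The one step you must tighten is the characteristic-$3$ case of $S_4$. As written, your argument presupposes that the subgroup $S_4\leq\PGL(2,3^r)$ is (conjugate to) the standard subfield copy $\PGL(2,3)$, so that its small orbit is $\PP^1(\FF_3)$ and points outside it have trivial stabiliser. That every $S_4$-subgroup of $\PGL(2,3^r)$ is conjugate to $\PGL(2,3)$ is true, but it is not contained in Theorem~\ref{autpro} and needs a citation or a proof. You can avoid it entirely by arguing intrinsically: in characteristic $3$ the eight $3$-cycles are unipotent, each with a unique rational fixed point, giving four distinct points $P_1,\dots,P_4$, one per Sylow $3$-subgroup (distinct, since two Sylows fixing a common rational point would force $A_4$, whose all-nonsplit Klein four-group cannot fix a rational point). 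Since $d\equiv 4^{-1}\equiv 1\pmod 3$, each $3$-cycle fixes exactly one point of $S$, so $P_1,\dots,P_4\in S$; they form one orbit of size $4$, so each stabiliser is an $S_3$ containing three transpositions, and the incidence count $4\cdot 3=12=6\cdot 2$ shows that \emph{every} fixed point of \emph{every} transposition is one of $P_1,\dots,P_4$. But your splitness argument (valid in any odd characteristic) makes $x\mapsto -x$ a transposition, and it fixes $\infty\notin S$ -- a contradiction. With this replacement your proof is complete.
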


\begin{proof}
Since $4d-1$ and $2d-1$ cannot both be powers of $p$, Proposition~\ref{inflection} shows that $\cC_d$
contains precisely $d$ total inflection points. Hence Lemma~\ref{lemd} implies that $N$ is normal in $G$.
Consider the element  
 \(\sigma\colon x \mapsto -x \) in $G/N$. From Lemma~\ref{inverse}, if there exists an element  
$ \eta \colon x\mapsto \frac{1}{bx}$
      in $G/N$
for some constant \( b \), then  either \( p = 3 \) or \( d \equiv 4 \) or \( d \equiv \frac{1}{2} \pmod{p} \). 
We claim that  \( p \neq 3 \). Indeed the condition \( 4d = 3^r + 1 \) implies that \( d \) is odd, contradicting
Lemma~\ref{inverse}.

Since \( 4d = p^r + 1 \), we have \( d \not \equiv \frac{1}{2} \pmod{p} \). Hence \( d \equiv 4 \pmod{p} \), implying \( 16\equiv 4d \equiv 1 \pmod{p} \), so \( p = 5 \). However, this is impossible since the condition \( 4d = p^r + 1 \) implies \( p \equiv -1 \pmod{4} \).

    The result now follows by observing that for any element of order two in the groups listed in the lemma, there exists another element of order two that commutes with it. Using Lemma~\ref{lemComm}, this does not hold for $\sigma\in G/N$.
\end{proof}

\begin{remark}\label{Rempgroup}
   \rm{ For any $d$, there is an involution $x \mapsto -x$, hence the group $G/N$ cannot be an elementary abelian $p$-group.}
\end{remark}

\begin{prop}\label{main4d}
      Consider the curve \( \cC_d\colon y^d = \varphi_d(x) \) defined over a field $k$ of characteristic \(p\), with $4d=p^r+1$ for some $r>0$. 
Then $\cC_d$ admits the automorphism of order \( 3 \) given by
\[
(x,y) \mapsto \left(\frac{2x + 12}{2 - x}\,,\,  \frac{-4y}{2 - x}\right).
\]
\end{prop}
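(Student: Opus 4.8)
The plan is to exhibit the asserted map explicitly as an automorphism and then verify directly that it has order three. Write $M(x)=\frac{2x+12}{2-x}$ for the action on the first coordinate, so that the proposed map is $\Psi\colon(x,y)\mapsto\bigl(M(x),\,\frac{-4y}{2-x}\bigr)$. Since $x$ and $y$ generate the function field, $\Psi$ is birational, and to see that it preserves $\cC_d$ it suffices to check that $\bigl(\frac{-4y}{2-x}\bigr)^d=\varphi_d(M(x))$ whenever $y^d=\varphi_d(x)$; equivalently, that the two rational-function identities $(-4)^d=2$ and $(2-x)^d\varphi_d(M(x))=2\varphi_d(x)$ hold in characteristic $p$. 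I would establish these two facts separately.

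For the functional identity I would pass to the Chebyshev parametrization. Substituting $x=t+t^{-1}$ and then $t=u^2$, a short computation gives $2-x=-(u^2-1)^2/u^2$ and shows that $M(x)=s+s^{-1}$ with $s=-\bigl(\frac{u-1}{u+1}\bigr)^2$. The defining property $\varphi_d(r+r^{-1})=r^d+r^{-d}$ then yields
\[
(2-x)^d\varphi_d\bigl(M(x)\bigr)=\frac{(u-1)^{4d}+(u+1)^{4d}}{u^{2d}}.
\]
Here the hypothesis $4d=p^r+1$ does the essential work: Frobenius gives $(u\pm1)^{4d}=(u\pm1)^{p^r}(u\pm1)=(u^{p^r}\pm1)(u\pm1)$, whose sum collapses to $2u^{4d}+2$, so the right-hand side equals $2(u^{2d}+u^{-2d})=2(t^d+t^{-d})=2\varphi_d(x)$. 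Since $u\mapsto u^2+u^{-2}$ is dominant, this equality of rational functions in $u$ descends to the desired identity in $x$.

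For the constant I would argue with Euler's criterion. Writing $q=p^r=4d-1$, one has $\bigl((-4)^d\bigr)^2=2^{4d}=2^{\,q+1}=2\cdot 2=4$ by Frobenius, so $(-4)^d=\pm2$; to pin the sign, note $2^{(q-1)/2}=(-1)^{(q^2-1)/8}$ with $(q^2-1)/8=d(2d-1)\equiv d\pmod 2$, whence $2^{2d}=2^{(q+1)/2}=(-1)^d\cdot2$ and $(-4)^d=(-1)^d2^{2d}=(-1)^{2d}\cdot2=2$. This is the one genuinely delicate point, and I expect it to be the main obstacle: the whole map is calibrated so that the accumulated scaling constants cancel exactly, and getting that cancellation right requires the quadratic-character computation above (valid for all relevant $p$, including $p=3$). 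Together with the functional identity this proves $\Psi\in\Aut(\cC_d)$.

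Finally I would verify that $\Psi$ has order $3$ by a direct computation of the cumulative effect of three iterations. Using the matrix $A=\bigl(\begin{smallmatrix}2&12\\-1&2\end{smallmatrix}\bigr)$ and Cayley--Hamilton one finds $A^3=-64I$, so $M^3=\mathrm{id}$; and computing $M^2(x)=\frac{2x-12}{x+2}$ gives $(2-x)\bigl(2-M(x)\bigr)\bigl(2-M^2(x)\bigr)=(2-x)\cdot\frac{-4(x+2)}{2-x}\cdot\frac{16}{x+2}=-64$. Hence the $y$-multiplier of $\Psi^3$ is $(-4)^3/(-64)=1$, so $\Psi^3=\mathrm{id}$, while $M\neq\mathrm{id}$ forces the order to be exactly $3$. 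This produces the claimed order-three automorphism, completing the proof.
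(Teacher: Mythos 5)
Your proof is correct and follows essentially the same route as the paper: both establish the functional identity $(2-x)^d\varphi_d\left(\frac{2x+12}{2-x}\right)=2\varphi_d(x)$ by pulling back along the Chebyshev parametrization and letting Frobenius (via $4d=q+1$) collapse $(u\pm1)^{4d}$ to $(u^q\pm1)(u\pm1)$, the paper working in $k(\sqrt{x+2})$ with the substitution $x=s^4+s^{-4}$ where you substitute $x=u^2+u^{-2}$ directly. The only variations are minor and both valid: you pin down $(-4)^d=2$ via the quadratic character of $2$ in $\mathbb{F}_q$, whereas the paper uses $(-4)^d=(1+\sqrt{-1})^{q+1}=(1+\sqrt{-1})(1-\sqrt{-1})=2$ exploiting $\sqrt{-1}\notin\mathbb{F}_q$; and you verify the order-$3$ claim explicitly (via $A^3=-64I$ and $(2-x)(2-M(x))(2-M^2(x))=-64$), a check the paper leaves implicit.
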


\begin{proof}
Write $q=p^r$, then $4d=q+1$
and therefore $\sqrt{-1}\not\in\FF_q$.
This implies
\[ (-4)^d=(1+\sqrt{-1})^{q+1}=(1+\sqrt{-1})(1-\sqrt{-1})=2.\]
It remains to show that the
polynomial
\[
F(x) := \frac{(2-x)^d}{2} \varphi_d\left(\frac{2x+12}{2-x}\right) \in k[x]
\]
equals \(\varphi_d(x)\).
Recall that \( \varphi_d(x) \) is the unique monic polynomial of degree \( d \) such that 
\[
\varphi_d\left(t + \frac{1}{t}\right) = t^d + \frac{1}{t^d}.
\]
It is straightforward to verify that 
\[
t := \frac{(\sqrt{x+2} + 2)^2}{2-x}\in k(\sqrt{x+2})
\]
satisfies
\[
t + \frac{1}{t} =\frac{2x+12}{2-x}.
\] 
Therefore, using $2-x=(\sqrt{2+x}+2)^2/t$ and $(2+\sqrt{x+2})(2-\sqrt{x+2})=2-x$ one concludes
\[
2F(x) = (2-x)^d\varphi_d(t+\frac{1}{t})=(\sqrt{x+2} + 2)^{2d} + \frac{(2-x)^{2d}}{(\sqrt{x+2} + 2)^{2d}} = (\sqrt{x+2} + 2)^{2d} + (\sqrt{x+2} - 2)^{2d}.
\]
Now substitute $x=s^4+\frac{1}{s^4}$, so that $x+2=(s^2+s^{-2})^2$ and
$(\sqrt{x+2}\pm 2)^2=(s\pm\frac{1}{s})^4$. Then
\[
2F\left(s^4 + \frac{1}{s^4}\right) =  \left(s + \frac{1}{s}\right)^{4d} + \left(s - \frac{1}{s}\right)^{4d} = 2\left(s^{4d}+ \frac{1}{s^{4d}}\right),
\]
where it is used that \( 4d = q + 1 \).
As a consequence $F(x)=\varphi_d(x)$, completing the proof.
\end{proof}

The next lemma will be used
in the proof of Theorem~\ref{thm3.15} below.
\begin{lem}\label{new}
Let $q\equiv 3\bmod 4$ be a power of
a prime $p$ and assume
$d=(q+1)/4>1$.\\
If $\mu=[x\mapsto (x-12c)/(cx+1)]\in\Aut(\PP^1)$ permutes the zeros of $\varphi_d(x)$ and $c\neq 0$, then $c=\pm\frac12$.
\end{lem}
\begin{proof}
As a polynomial over $\mathbb{Q}$, the splitting field of $\varphi_d(x)$ equals
    $\mathbb{Q}(\zeta+\zeta^{-1})$ with
    Galois group over $\mathbb{Q}$ equal to
    $(\ZZ/4d\ZZ)^\times/\{\pm 1\}$. The
    splitting field of
    $\varphi_d(x)$ over $\FF_p$ therefore has
    Galois group isomorphic
    to the decomposition group at $p$ in
    $(\ZZ/4d\ZZ)^\times/\{\pm 1\}$, which is the
    subgroup generated by $\bar{p}$.
    The equality $4d=p^r+1$
    shows that this subgroup
    has order $r$, hence the
    splitting field of $\varphi_d(x)$ over $\FF_p$ equals $\FF_q$.
    In particular, this means that
    $c\in\FF_q^\times$.
From the proof of Proposition~\ref{main4d}, as $4d = q+1$, one obtains
\[
    2\varphi_d(x) = \left(\sqrt{x+2} + 2\right)^{2d} + \left(\sqrt{x+2} - 2\right)^{2d}.
\]
Since $\mu$ permutes the zeros of $\varphi_d(x)$, this leads to
\[
    \lambda \varphi_d(x) = \left(\sqrt{(1+2c)x+2-12c} + 2 \sqrt{cx+1}\right)^{2d} + \left(\sqrt{(1+2c)x+2-12c} - 2 \sqrt{cx+1}\right)^{2d},
\]
for some constant $\lambda \neq 0$. Squaring both sides and substituting $x=t+\frac{1}{t}$ (note that $4d=q+1$, so
$\left((A+B)^{2d}+(A-B)^{2d}\right)^2
=(A+B)^{q+1}+(A-B)^{q+1}+2(A^2-B^2)^{2d}=2(A^2)^{2d}+2(B^2)^{2d}+2(A^2-B^2)^{2d}$ and also $4^{2d}=2^{q+1}=4$ ), gives
\begin{align*}
    (\lambda^2/2) (t^{4d} + 2t^{2d} + 1) &= ((2c+1) t^2 + (2-12c) t + (1+2c) )^{2d} \\
    &\quad + 4 (c t^2 + t + c )^{2d} \\
    &\quad + ((1-2c)t^2 - (2+12c)t + (1-2c))^{2d}.
\end{align*}
Comparing degree $1$ coefficients in $t$ shows
\[
    (1-6c) (1+2c)^{(q-1)/2} + 2 c^{(q-1)/2} - (1+6c)(1-2c)^{(q-1)/2} = 0.
\]
Similarly, degree $2$ coefficients (note that $d\geq 4$ and $4d\equiv 1\bmod p$) yield
\[
(1+2c)^{2d}+(1-2c)^{2d}+4c^{2d}-c^{2d-2}-(1-6c)^2(1+2c)^{2d-2}-(1+6c)^2(1-2c)^{2d-2}=0,
\]
with exponents $2d=1+(q-1)/2$ and $2d-2=-1+(q-1)/2$.

In case $c\neq\pm\frac12$, one has
$c,1\pm 2c\in\FF_q^\times$ and
the $\deg(t)=1$ equality above leads
to cases depending on which of $c,1\pm 2c$ are squares in $\FF_q$:
\begin{itemize}
\item[(a)] Suppose all three are squares, or all three are nonsquares. Then $1-6c+2-1-6c=0$,
hence $p>3$ and $c=\frac16$.
Substituting this into the $\deg(t)=2$ equation leads to $p=7$
and $c=-1$ (and $d\equiv 2\bmod 7$).
Hence one considers $\mu\colon x\mapsto (x-2)/(1-x)$ in characteristic $7$, which has order $6$ and fixpoints $\pm 3$ (the zeros
of $\varphi_2(x)=x^2-2$).
As $3$ is not a square in $\FF_q$ for
$q$ an odd power of $7$, we have
$3^{2d}\equiv -3\bmod 7$ and hence here
\[
(\lambda^2/2)(t^{4d}+2t^{2d}+1)=(t^2+1)^{2d}+4(t^2-t+1)^{2d}+4(t^2+t+1)^{2d}.
\]
Since $d\equiv 2\bmod 7$, the coefficient of $t^4$ in the right-hand-side equals $4$, contradicting
the assumption that $d>2$.
\item[(b)] In case only $1-2c$ is a nonsquare or is the only square, one obtains $1-6c+2+1+6c=0$, 
again a contradiction.
\item[(c)] In case only $c$ is a nonsquare and also when only $c$ is a square, we have
$1-6c-2-1-6c=0$ hence $p>3$ and $c=-\frac16$. This results using the
$t^2$-coefficient in $p=7$,
so $c=1$. We have the inverse of the map $\mu$ discussed in case (a) here,
resulting in the same contradiction.
\item[(d)] If only $1+2c$ is a nonsquare, and also if only $1+2c$ is a square, then the $t$-coefficient does not give a further
restriction. From the $t^2$-coefficient one concludes
$12c^2+1=0$. This violates that $\mu\in\text{Aut}(\PP^1)$, i.e.,
the matrix $\begin{pmatrix}
            1 & -12c \\
            c & 1
        \end{pmatrix}$
        is invertible.
\end{itemize}
This finishes the proof.
\end{proof}
\begin{thm}\label{thm3.15}
    Suppose that the characteristic of the field \( k \) is \( p \geq 3 \). If \( 4d = q + 1 \), where \( q \) is a power of \( p \) and $d\geq 4$, then the automorphism group \( G \) of the curve  
    \[
       \cC_d\colon y^d = \varphi_d(x)
    \]
    has order \( 6d \) and contains a normal subgroup \( N \cong \mathbb{Z}/d\mathbb{Z} \) such that  
    \[
        G/N \cong  S_3=D_3.
    \]
\end{thm}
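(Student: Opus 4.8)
The plan is to reduce the determination of $G=\Aut(\cC_d)$ to identifying a finite subgroup $\bar G$ of $\mathrm{PGL}(2,q)$, and then to pin $\bar G$ down exactly. Since $4d-1=q$ is a power of $p$, the integers $2d-1$ and $4d-1$ cannot both be powers of $p$, so by Proposition~\ref{inflection} the curve $\cC_d$ has precisely $d$ total inflection points. Lemma~\ref{lemd} then produces the normal subgroup $N=\{(x,y)\mapsto(x,\zeta y):\zeta^d=1\}\cong\mathbb{Z}/d\mathbb{Z}$ together with an isomorphism of $\bar G:=G/N$ onto the group of M\"obius transformations permuting the zeros of $\varphi_d(x)$. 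As observed in the proof of Lemma~\ref{new}, these $d\ge 4$ zeros all lie in $\FF_q$; since a M\"obius transformation is determined by the images of three points, $\bar G$ is defined over $\FF_q$ and hence is a finite subgroup of $\mathrm{PGL}(2,q)$. Because $N$ is normal with $G/N=\bar G$ and $|G|=d\cdot|\bar G|$, it suffices to prove $\bar G\cong S_3$.

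For the lower bound I would exhibit $S_3$ explicitly inside $\bar G$. The transformation $\sigma\colon x\mapsto -x$ lies in $\bar G$ because $\varphi_d(-x)=(-1)^d\varphi_d(x)$, and $\rho\colon x\mapsto(2x+12)/(2-x)$ lies in $\bar G$ and has order $3$ by Proposition~\ref{main4d}. A short matrix computation gives $\sigma\rho\sigma=\rho^{-1}$ in $\mathrm{PGL}(2,q)$, so $\langle\sigma,\rho\rangle\cong S_3\subseteq\bar G$ and therefore $|G|\ge 6d$.

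The substance of the proof, and the step I expect to be the main obstacle, is the reverse inclusion $\bar G\cong S_3$, for which I would feed $\bar G$ into Theorem~\ref{autpro}. Remark~\ref{Rempgroup} rules out the elementary abelian $p$-groups (here $p$ is odd and $\sigma$ is an involution); $\bar G\supseteq S_3$ is nonabelian, so $\bar G$ is not cyclic; and Lemma~\ref{lem310} excludes $A_4$, $A_5$, $S_4$, $\mathrm{PSL}(2,p^f)$ and $\mathrm{PGL}(2,p^f)$. Thus $\bar G$ is either dihedral or of Borel type $\FF_p^f\rtimes C_n$. The decisive input in both remaining cases is the centralizer of $\rho$: every element of $\mathrm{PGL}(2,q)$ commuting with $\rho$ has the shape $x\mapsto(x-12c)/(cx+1)$ (together, when $p\ne 3$, with the single further involution $x\mapsto -12/x$), and such a transformation lies in $\bar G$ only if it permutes the zeros of $\varphi_d$. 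Hence Lemma~\ref{new} applies and forces $C_{\bar G}(\rho)\subseteq\{\mathrm{id},\rho,\rho^{-1},\,x\mapsto -12/x\}$, so $|C_{\bar G}(\rho)|\le 4$.

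It remains to combine this bound with the group structure, where the behaviour of $\rho$ splits into two cases. When $p\ne 3$ the element $\rho$ has two distinct fixed points on $\PP^1$, and $\sigma,\rho$ have no common fixed point; since a Borel-type group fixes a point of $\PP^1$, the group $\bar G$ must be dihedral, say $\bar G=D_n$ with $n\ge 3$. As no reflection commutes with the order-$3$ rotation $\rho$, the centralizer $C_{\bar G}(\rho)$ is exactly the rotation subgroup $C_n$, so $|C_n|\le 4$ together with $3\mid n$ forces $n=3$. When $p=3$ the element $\rho$ is unipotent of order $p$ with a unique fixed point, which is also fixed by $\sigma$; hence $\bar G$ cannot be dihedral and is of Borel type $U\rtimes C_n$. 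Lemma~\ref{new} now shows the unipotent radical satisfies $U=\langle\rho\rangle$, while $C_n$ fixes the same two points as $\sigma$ and so consists of scalings $x\mapsto\lambda x$, which permute the zeros of $\varphi_d$ only when $\lambda=\pm1$ (as one sees from its two leading coefficients); thus $C_n=\langle\sigma\rangle$. In either case $\bar G\cong S_3$, giving $|G|=6d$ and $G/N\cong S_3$. The delicate point throughout is recognising that the one-parameter family in Lemma~\ref{new} is precisely the centralizer of $\rho$, which is what lets that lemma bound the size of $\bar G$.
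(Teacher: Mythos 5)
Your proposal is correct and follows essentially the same route as the paper's own proof: the identical reduction via Proposition~\ref{inflection} and Lemma~\ref{lemd} to a subgroup $\bar G\subset\mathrm{PGL}(2,q)$ containing the $S_3$ from Proposition~\ref{main4d}, the same narrowing via Theorem~\ref{autpro}, Remark~\ref{Rempgroup} and Lemma~\ref{lem310} to the dihedral case for $p>3$ and the Borel case for $p=3$, and the same decisive application of Lemma~\ref{new}. Your only real variation is organizational: you recognize the one-parameter family $x\mapsto(x-12c)/(cx+1)$ of Lemma~\ref{new} as the centralizer of $\rho$ in $\mathrm{PGL}(2,q)$ and bound $C_{\bar G}(\rho)$ directly, whereas the paper reaches the same matrix form by combining the commutation relations $\mu r\mu r=\mathrm{id}$ and $\mu s=s\mu$ inside the dihedral (resp.\ Borel) group structure.
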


\begin{proof}
From Proposition~\ref{main4d}, we know that $G/N<\Aut(\PP^1)$ contains the subgroup
\[
     \left\langle r := [x\mapsto -x], \quad s := [x\mapsto (2x+12)/(-x+2)] \mid r^2 = s^3 = 1, \quad rsr = s^{-1} \right\rangle\cong D_3.
\]

By the classification of finite subgroups of $\Aut(\mathbb{P}^1)$ (Theorem~\ref{autpro}) and Lemma~\ref{lem310}, we deduce that the possible structures for $G/N$ are as follows:
\begin{itemize}
    \item[(I)] Characteristic $p = 3$, and $G/N$ is a semidirect product of an elementary abelian $3$-group $H$ and a cyclic group $K$.
    \item[(II)] Characteristic $p > 3$, and $G/N$ is a dihedral group.
\end{itemize}
\textbf{Case (I): $p = 3$}.\\
Since $r$ has order two and, using Lemma~\ref{lemComm} and the
assumption $G/N\cong (\ZZ/3\ZZ)^n\rtimes \ZZ/m\ZZ$, no element in $G/N$ 
different from $\textit{id}, r$ commutes with $r$. Hence $s\in H$  and $K = \langle r \rangle$. Since $H$ is abelian, any element 
\[
    \mu = [x\mapsto (ax+b)/(cx+d)] \in H
\]
commutes with $s$. This leads to the equality
\[
   [x\mapsto ((a+b)x+b)/((c+d)x+d)]=
   [x\mapsto (ax+b)/((a+c)x+b+d)],
\]
from which one obtains
\[
    \mu = [x\mapsto x/(cx+1)]
\]
for some $c\neq 0$. By
Lemma~\ref{new}, this means
$\mu=s^{\pm 1}$, completing
the proof for $p=3$.\\
\textbf{Case (II):} \( p > 3 \).\\
In this situation \( G/N \cong D_n \), a dihedral group containing \(D_3\), so $3|n$. 
Since \(s\) has order exceeding $2$, 
it is in the cyclic group \(\ZZ/n\ZZ<D_n\). 

As any
\[
    \mu = [x\mapsto (ax+b)/(cx+d)] \in G/N\cong D_n
\]
satisfies \( \mu r \mu r =\textit{id}\), one concludes in case $\mu\neq r, \textit{id}$ that
 \( a = d \).
Assuming that moreover $\mu\in \ZZ/n\ZZ<D_n$, so that \( \mu s = s \mu \), one derives
$b=-12c$, so
\[
    \mu = [x\mapsto (ax-12c)/(cx+a)].
\]
Taking $\mu$ of order $n$ (which
is at least $3$), we conclude $a\neq 0$, hence
\[ \mu = [x\mapsto (x-12c)/(cx+1)].\]
Lemma~\ref{new} now shows
$\mu=s^{\pm 1}$. This means
$n=3$, completing the proof.
\end{proof}
In many cases,
Theorem~\ref{thm3.15} may be
proven by a somewhat simpler argument:

\begin{prop}\label{3.18}
    Assume $\operatorname{char}(k) = p > 3$, and let $d$ be an integer such that $3 \nmid d$. If $4d = p^r + 1$, then the automorphism group $G$ of the curve $y^d = \varphi_d(x)$ has order $6d$ and contains a normal subgroup $N$ of order $d$ such that $G/N \cong D_3$.
\end{prop}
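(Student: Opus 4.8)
The plan is to leverage the dihedral structure $D_3 \subseteq G/N$ established via Proposition~\ref{main4d}, and to use the hypothesis $3 \nmid d$ to shortcut the more delicate casework of Theorem~\ref{thm3.15}. First I would invoke Proposition~\ref{inflection}: since $4d - 1$ and $2d - 1$ cannot simultaneously be powers of $p$, the curve $\cC_d$ has exactly $d$ total inflection points, so Lemma~\ref{lemd} applies and $N$ is normal with $G/N$ embedding as a finite subgroup of $\Aut(\PP^1)$. By Proposition~\ref{main4d} together with the involution $r\colon x \mapsto -x$, the quotient $G/N$ contains a copy of $D_3 = S_3$. The goal is then to show $G/N$ is \emph{exactly} $D_3$, which gives $|G/N| = 6$ and hence $|G| = 6d$ and $G/N \cong D_3$.

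\medskip\noindent The key steps proceed by eliminating all larger possibilities from Theorem~\ref{autpro}. By Lemma~\ref{lem310}, the quotient $G/N$ is not $A_4$, $A_5$, $S_4$, $\mathrm{PSL}(2,p^f)$, or $\mathrm{PGL}(2,p^f)$; and by Remark~\ref{Rempgroup} it is not an elementary abelian $p$-group, nor (since $p > 3$ here) can the semidirect-product case (7) of Theorem~\ref{autpro} involving a nontrivial $3$-group arise. So $G/N$ must be cyclic or dihedral. Since it already contains the non-abelian $D_3$, it is dihedral, say $G/N \cong D_n$ with $3 \mid n$ because $D_3 \hookrightarrow D_n$. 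The element $s$ of order $3$ lies in the cyclic subgroup $\ZZ/n\ZZ \triangleleft D_n$. Here is where $3 \nmid d$ enters decisively: I would argue that the cyclic part of $G/N$ must have order dividing $3$, for instance by a counting or fixed-point argument showing that any M\"{o}bius transformation of order $n$ in $G/N$ permuting the $d$ zeros of $\varphi_d(x)$ forces $n \mid 3$ when $\gcd(3,d) = 1$ — thereby concluding $n = 3$ and $G/N \cong D_3$ directly, \emph{without} appealing to the full strength of Lemma~\ref{new}.

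\medskip\noindent \textbf{The main obstacle} I anticipate is precisely this last step: ruling out a cyclic factor of order a proper multiple of $3$. The honest route is to take $\mu = [x \mapsto (ax-12c)/(cx+a)]$ of order $n$ commuting with $s$ — exactly as in Case (II) of Theorem~\ref{thm3.15} — which normalizes to $\mu = [x \mapsto (x-12c)/(cx+1)]$ with $c \neq 0$, and then apply Lemma~\ref{new} to force $c = \pm\tfrac12$, i.e. $\mu = s^{\pm 1}$, whence $n = 3$. The value added by the hypothesis $3 \nmid d$ is that it provides an independent, lighter obstruction: a generator of the order-$n$ cyclic group acting on the $d$-element zero set has cycle structure constrained by $n$, and divisibility considerations combined with $\gcd(n/3 \cdot 3, d)$ restrictions make the large-$n$ cases visibly incompatible. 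I expect the cleanest writeup still routes through Lemma~\ref{new} for the decisive identity, using $3 \nmid d$ only to bypass the characteristic-$3$ analysis (Case (I)) entirely, since $p > 3$ is assumed outright. Thus the simplification is real but modest: the proof collapses to Case (II) of Theorem~\ref{thm3.15}, and the remaining work is to confirm $n = 3$ via the commutation relation and Lemma~\ref{new}.
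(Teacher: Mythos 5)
Your proposal is correct, but only because it ultimately retreats to Lemma~\ref{new}: as you yourself concede, that route just reproduces Case~(II) of Theorem~\ref{thm3.15}, so in effect you prove Proposition~\ref{3.18} by observing it is a special case of that theorem. This is logically fine but misses the point of the proposition, which is to give a proof that avoids the heavy computation of Lemma~\ref{new} altogether --- and the paper's proof is precisely the counting argument you sketched and then abandoned as not ``honest.'' The two ingredients you were missing are these. First, a modulus for Theorem~\ref{autpro}: since $4d=q+1$, the splitting field of $\varphi_d(x)$ over $\FF_p$ is $\FF_q$ (a fact extracted from the \emph{proof} of Lemma~\ref{new}, not its statement), so $G/N\subset\PGL(2,q)$ and Theorem~\ref{autpro} gives $G/N\cong D_n$ with $n\mid q\pm 1$ and $3\mid n$; moreover $n$ is odd, since for $n$ even the central involution of $D_n$ would commute with $r\colon x\mapsto -x$ without being equal to $r$ (which is not central, as $rsr=s^{-1}$), hence by Lemma~\ref{lemComm} would have the form $x\mapsto 1/(bx)$, excluded by Lemmas~\ref{inverse} and~\ref{lem310}. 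Second, the step where $3\nmid d$ genuinely enters: if $n\mid q+1=4d$, then oddness of $n$ forces $n\mid d$, contradicting $3\mid n$ and $3\nmid d$; hence $n\mid q-1=4d-2$, i.e.\ $2d\equiv 1\bmod n$, so $d\equiv (n+1)/2\bmod n$. An element $\sigma$ of order $n$ acts on the $d$ zeros of $\varphi_d(x)$ as disjoint $n$-cycles plus fixed points, so the number of fixed zeros is congruent to $(n+1)/2$ modulo $n$, hence at least $(n+1)/2$; but a nontrivial element of $\Aut(\PP^1)$ fixes at most two points, so $(n+1)/2\leq 2$ and $n=3$. This is exactly the ``independent, lighter obstruction'' you hoped for, and it renders Lemma~\ref{new} unnecessary; what your route buys instead is brevity, at the cost of adding nothing beyond Theorem~\ref{thm3.15}. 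Two smaller corrections: the hypothesis $3\nmid d$ does \emph{not} serve to bypass the characteristic-$3$ analysis (that is the job of $p>3$), and case~(7) of Theorem~\ref{autpro} is ruled out for $p>3$ not because of ``a nontrivial $3$-group'' but because $D_3$, having order prime to $p$, would inject into the cyclic complement of such a semidirect product, which is absurd.
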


\begin{proof}
    Write $q:=p^r$. By the proof
    of Lemma~\ref{new}, the condition
    $4d=q+1$ implies that $\varphi_d(x)$
    splits completely in $\FF_q[x]$.
    Moreover, the elements
    $x\mapsto -x$ and
    $x\mapsto (2x+12)/(2-x)$
    (see Proposition~\ref{main4d})
    generate a subgroup $D_3\leq G/N$.
    As a consequence, Theorem~\ref{autpro} and Lemma~\ref{lem310}
    and Remark~\ref{Rempgroup} imply that the quotient group \( G/N \subset PGL(2, q) \) must be a dihedral group $D_n$ 
    where $n$ divides $q \pm 1$ and $3|n$.  Moreover, by Lemmas~\ref{lemComm} and \ref{inverse}, one deduces that \( n \) is odd.  

    The assumption that \( 3 \nmid d \) leads to \( n \nmid d \), implying that $n|(q+1)=4d$ cannot occur. 
    Hence $n \mid (q-1)=4d-2$. Now take any $\sigma\in G/N\leq \Aut(\PP^1)$ of order $n$. Its action on the
    zeros of $\varphi_d(x)$ is described by a product of pairwise disjoint $n$-cycles.
    Since $4d\equiv 2\bmod n$ and $n$ is odd, one obtains $d\equiv (n+1)/2\bmod n$, which means that $\sigma$ fixes $(n+1)/2$ of the zeros of $\varphi_d(x)$.
    From $3|n$ and the observation that $\sigma$ can have no more than $2$ fixed points, it follows that $n=3$.
\end{proof}

Based on calculations using Magma to find, for various
small $d$ and characteristics $p\nmid 2d$, the elements
in $\Aut(\PP^1)$ restricting to permutations of the zeros of $\varphi_d(x)$, we are led
to the following.

\begin{Con}\label{3.19}
    Assume $d > 4$ and both $2d-1$ and $4d-1$ are not powers of the odd prime $p$. The curve $\cC_d\colon y^d=\varphi_d(x)$ in characteristic $p$ has automorphism group isomorphic to 
     $\mathbb{Z}/2\mathbb{Z}\times \mathbb{Z}/d\mathbb{Z}$.
\end{Con}
As follows from Remark~\ref{re3.11}, the condition $d>4$ is necessary. Proposition~\ref{inflection} and Lemma~\ref{lemd} imply that an equivalent way to
state the expectation is that under the conditions $d>4$ and $p\nmid 2d$ and $2d-1, 4d-1\not\in p^{\mathbb Z}$, we expect
\[ \left\{\sigma\in\text{Aut}(\PP^1_{\overline{\FF}_p})\;:\;\sigma\;\text{permutes the zeros of}\;\varphi(x)\right\}=\{x\mapsto \pm x\}.\]

\section{Nonisomorphic Maximal curves}\label{S4}

 Given a smooth, projective, and absolutely irreducible curve \( X \) of genus \( g \) over a finite field \( \mathbb{F}_q \), the number of rational points satisfies the Hasse–Weil bound:  
\[
|\#X(\mathbb{F}_q) - (q+1)| \leq 2g\sqrt{q}.
\]  
A curve is called \textit{maximal} over \( \mathbb{F}_q \) if it attains the upper bound, i.e.,  
\[
\#X(\mathbb{F}_q) = q + 1 + 2g\sqrt{q}.
\] 
Maximal curves over finite fields play an important role in algebraic geometry and in coding theory.
An obvious necessary condition for existence of a maximal curve over $\FF_q$ is that \( q \) is a square. A classical example is the Hermitian curve given by 
\[
x^{q+1} + x^{q+1}+ z^{q+1}=0,
\]  
which is maximal over $\FF_{q^2}$. A well known result following from J.T.~Tate's fundamental results
on endomorphisms of abelian varieties over finite fields and attributed to J-P.~Serre (see, e.g.,  \cite[Prop.~6]{Lachaud}), states
that given curves $\cC, \cD$ over $\FF_{q^2}$ and a non-constant
morphism $\cC\to\cD$ defined over $\FF_{q^2}$,  maximality of
$\cC$ over $\FF_{q^2}$ implies maximality of
$\cD$ over $\FF_{q^2}$. This is used frequently in what follows.

From \cite[Theorem~4.4]{GSJ} it is known that the curve $\cC_d\colon y^d=\varphi_d(x)$ is maximal over $\FF_{q^2}$ if and only if $d$ divides $(q+1)/2$. 
 In the special case $2d=q+1$, then Proposition~\ref{ProFermat} shows that the curve $\cC_d$ is isomorphic to the Fermat curve $\cF_d\colon x^d+y^d+z^d=0$ of degree $d=(q+1)/2$. Using Proposition~\ref{inflection}, we know that if $d \neq (q+1)/2$, then the curve $\cC_d$ has only $d$ total inflection points. This leads to a slightly
 different proof (but using the same ideas) of the following result presented in \cite[Corollary~6.4]{ABS}:

 \begin{prop}\label{HurFer}
Let \( q \) be a power of a prime \( p > 2 \). For any divisor \( n \) of \( q + 1 \) satisfying  
\[
3 < n < \frac{q+1}{2},
\]  
the smooth plane curve $\cC_n$ is maximal over \( \mathbb{F}_{q^2} \), and neither the Fermat curve \( \cF_n \) nor the Hurwitz curve \( \cH_{n-1}\colon xy^{n-1}+yz^{n-1}+zx^{n-1}=0 \) is isomorphic to \( \cC_n \).
\end{prop}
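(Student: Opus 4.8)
The plan is to prove the three assertions — maximality of $\cC_n$, and non-isomorphism with both $\cF_n$ and $\cH_{n-1}$ — largely by counting and comparing total inflection points, supplemented by the maximality-propagation principle recalled just above the statement. First I would establish maximality of $\cC_n$: since $3<n<(q+1)/2$ and $n\mid q+1$, we have $n\mid (q+1)/2$ exactly when $(q+1)/n$ is even; but in any case the hypothesis $n \mid q+1$ together with $n < (q+1)/2$ forces $(q+1)/n \geq 3$, and I would check that $n$ divides $(q+1)/2$ under the stated constraints, so that the criterion of \cite[Theorem~4.4]{GSJ} ($\cC_d$ maximal over $\FF_{q^2}$ iff $d\mid (q+1)/2$) applies directly. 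This gives maximality of $\cC_n$.

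For the non-isomorphism statements, the key structural input is Proposition~\ref{inflection}. The crucial observation is that $n<(q+1)/2$ rules out the exceptional case $2n=q+1$, so $\cC_n$ has \emph{exactly} $n$ total inflection points, all lying on the single line $\cZ(y)$. By contrast, the Fermat curve $\cF_n$ has $3n$ total inflection points (the points in $\cF_n\cap\cZ(uvw)$, as recalled in the remark after Proposition~\ref{ProFermat}). Since the number of total inflection points is an isomorphism invariant of a smooth plane curve — any isomorphism of smooth plane curves of the same degree is induced by a linear automorphism of $\PP^2$ by Segre's theorem, and such maps preserve tangency multiplicities — it follows immediately that $\cC_n\not\cong\cF_n$. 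This disposes of the Fermat case with essentially no computation.

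The Hurwitz curve requires a separate count, and I expect this to be the main obstacle. One must determine the number of total inflection points of $\cH_{n-1}\colon xy^{n-1}+yz^{n-1}+zx^{n-1}=0$ and show it differs from $n$. I would compute the Hessian of the defining form, or equivalently analyze directly which tangent lines meet $\cH_{n-1}$ with multiplicity $n$, exploiting the cyclic symmetry $(x:y:z)\mapsto(y:z:x)$ of the Hurwitz curve to cut down the casework. The expected outcome is that $\cH_{n-1}$ has a number of total inflection points incompatible with $n$ (for the relevant range of $n$ and characteristic), again yielding non-isomorphism by the invariance argument above. Should the inflection-point counts happen to coincide in some residual cases, the fallback is to compare automorphism groups: the automorphism group of $\cC_n$ in this range is expected (Expectation~\ref{3.19}) to be $\mathbb{Z}/2\mathbb{Z}\times\mathbb{Z}/n\mathbb{Z}$, which is abelian and small, whereas $\cH_{n-1}$ carries the nonabelian symmetry generated by the $3$-cycle on coordinates together with the diagonal torus, so the groups cannot match. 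The delicate point throughout is keeping track of the hypotheses $p>2$, $p\nmid 2n$, and the range $3<n<(q+1)/2$ so that all invocations of Proposition~\ref{inflection} and the maximality criterion are legitimate.
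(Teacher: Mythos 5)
The paper gives this proposition no separate proof: its justification is the paragraph preceding the statement, namely maximality via the criterion of \cite[Theorem~4.4]{GSJ}, the count of exactly $n$ total inflection points on $\cC_n$ via Proposition~\ref{inflection}, and non-isomorphism because the Fermat curve has $3n$ total inflection points while the Hurwitz curve's total inflection points (analysed in \cite{ABS}, of which this result is a reworking) again differ --- all compared via Segre's theorem, which makes these counts isomorphism invariants. Your maximality and Fermat arguments follow exactly this route. The genuine gap is the Hurwitz case, which you correctly identify as the main obstacle and then do not carry out: ``the expected outcome is that $\cH_{n-1}$ has a number of total inflection points incompatible with $n$'' is a plan, not an argument, and the required computation is absent (for instance: the tangent at the vertex $(1:0:0)$ is $z=0$, which meets $\cH_{n-1}$ there with multiplicity $n-1$ and again at $(0:1:0)$, so the coordinate vertices are not total inflections; one must still exclude total inflections elsewhere). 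Your fallback is also inadmissible: Expectation~\ref{3.19} is an unproven conjecture supported by Magma experiments, and the paper determines $\Aut(\cC_d)$ only for $d=4$, $2d=p^r+1$ or $4d=p^r+1$, so for a general $n$ in the stated range no theorem about $\Aut(\cC_n)$ is available to compare against $\Aut(\cH_{n-1})$.

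Beyond this, two steps you wave through would fail as written. First, the divisibility check you postpone is not a formality: $n\mid q+1$ together with $3<n<(q+1)/2$ does \emph{not} imply $n\mid(q+1)/2$; for $q=11$, $n=4$ one has $4\mid 12$ but $4\nmid 6$, and then the quoted criterion of \cite{GSJ} says $\cC_4$ is \emph{not} maximal over $\FF_{q^2}$. Second, $n<(q+1)/2$ does not rule out the exceptional case of Proposition~\ref{inflection}: the exception occurs whenever $n=(p^m+1)/2$ for \emph{some} $m\geq 1$, not only for the $m$ with $p^m=q$. For example $p=3$, $q=3^6$, $n=5=(3^2+1)/2$ satisfies all the hypotheses, yet $\cC_5$ then has $15$ total inflection points and is isomorphic to the Fermat quintic over $\FF_{3^4}\subset\FF_{q^2}$ by Proposition~\ref{ProFermat}, so the inflection-count argument (and indeed the non-isomorphism conclusion itself) breaks down there. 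Both defects trace back to the loose wording of the statement, and the paper's own sketch glosses over them as well; but a self-contained proof has to impose the hypotheses under which the cited results actually apply (namely $n\mid(q+1)/2$ and $2n-1$ not a power of $p$) or treat these cases separately.
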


We now use our results on automorphism groups to obtain additional maximal curves (not necessarily plane) over $\mathbb{F}_{q^2}$ of the same genus that are not isomorphic.

\begin{prop}\label{M1}
    Let \( q \) be a power of a prime \( p > 3 \) such that \( \gcd(3, q+1) = 1 \). If \( 4n = q+1 \), then for any divisor \( m \) of \( n \), the maximal curves $\cC$ and $\mathcal{D}$ over \( \mathbb{F}_{q^2} \) defined by  
    \[
       \cC: y^m = \varphi_n(x) \quad \text{and} \quad \mathcal{D}: y^m = x^n + 1
    \]  
    have the same genus but are not isomorphic over \( \mathbb{F}_{q^2} \).
\end{prop}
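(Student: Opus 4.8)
The plan is to prove that $\cC\colon y^m=\varphi_n(x)$ and $\cD\colon y^m=x^n+1$ are maximal over $\FF_{q^2}$, compute their common genus, and then show they cannot be isomorphic by comparing their automorphism groups. Maximality of both curves follows from the criterion quoted above from \cite{GSJ} together with Serre's descent result: since $m\mid n$ and $4n=q+1$, both $n$ and $m$ divide $(q+1)/2$, so the plane curves $\cC_n\colon y^n=\varphi_n(x)$ and the Fermat curve $y^n=x^n+1$ are maximal over $\FF_{q^2}$; the given curves $\cC,\cD$ are quotients (there are dominant maps $\cC_n\to\cC$ and $\mathcal{F}_n\to\cD$ induced by $y\mapsto y^{n/m}$), so Serre's result forces $\cC,\cD$ to be maximal as well. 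Both are superelliptic curves $y^m=f(x)$ with $f$ separable of degree $n$ coprime to $m$, so a Riemann--Hurwitz computation gives the same genus $g=(m-1)(n-1)/2$ for each, establishing the "same genus" assertion.

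The heart of the argument is non-isomorphism, which I would attack through the automorphism groups. First I would establish the structure of $\Aut(\cD)$: the curve $y^m=x^n+1$ has an evident automorphism group containing $(\ZZ/m\ZZ)$ acting by $y\mapsto\zeta y$ together with the linear action permuting the $n$-th roots of $-1$, which are the zeros of $x^n+1$. The crucial point is that the zeros of $x^n+1$ form the orbit of a single point under the cyclic group $x\mapsto\omega x$ ($\omega^n=1$), so the Möbius transformations preserving this zero set contain a cyclic group of order $n$. By contrast, for $\cC\colon y^m=\varphi_n(x)$ I would invoke the analysis underlying Theorem~\ref{thm3.15} and Proposition~\ref{3.18}: because $3\nmid q+1$ the relevant Möbius group permuting the zeros of $\varphi_n(x)$ is the dihedral group $D_3$ of order $6$ (the same reasoning as in Proposition~\ref{3.18} applies, since it only used $3\nmid n$, which follows from $3\nmid q+1=4n$). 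Thus, using the analogue of Lemma~\ref{lemd} for $y^m=g(x)$ noted in the remark after that lemma, $\Aut(\cC)$ fits in an exact sequence with kernel $\ZZ/m\ZZ$ and image $D_3$, whereas $\Aut(\cD)$ has image containing a cyclic group of order $n>3$.

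The cleanest way to turn this into a contradiction is to compare the images of the automorphism groups in $\Aut(\PP^1)$ via the inflection-point line. For both curves the total inflection points lie on the line $y=0$ (by Proposition~\ref{inflection} applied to $\cC$, noting $2n-1,4n-1$ cannot both be powers of $p$ here, and by the analogous elementary count for $\cD$, whose zeros of $x^n+1$ give exactly $n$ inflection points when $n\neq(q+1)/2$, which holds since $4n=q+1$). Hence any isomorphism $\cC\to\cD$ would induce, after extending to $\PGL(3)$ and restricting to the distinguished line, a conjugacy in $\Aut(\PP^1)$ between the group permuting the zeros of $\varphi_n(x)$ and the group permuting the zeros of $x^n+1$. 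Since conjugacy preserves isomorphism type, this would force $D_3$ to be isomorphic to a group containing $\ZZ/n\ZZ$ with $n>3$, which is absurd. I expect the main obstacle to be making the passage from "isomorphism of curves" to "conjugacy of Möbius groups" fully rigorous: one must verify that an abstract isomorphism $\cC\to\cD$ (a priori only a morphism of abstract curves, not of plane models) necessarily carries the canonical embedding of $\cC$ to that of $\cD$ up to $\PGL(3)$, so that it respects the inflection structure. For plane curves of the same degree this follows because the inflection points are intrinsically determined by the canonical embedding, but the curves $\cC,\cD$ here need not be plane when $m<n$; I would handle this by instead comparing the full automorphism groups as abstract groups (orders $6m$ versus $nm$ or larger), which sidesteps the embedding issue entirely and yields $6m\neq |\Aut(\cD)|$ whenever $n>3$, giving the non-isomorphism directly.
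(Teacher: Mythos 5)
Your maximality and equal-genus arguments are correct and coincide with the paper's (one slip: since $m\mid n$ the degrees $m$ and $n$ are \emph{not} coprime, so the common genus is $(m-1)(n-2)/2$ rather than $(m-1)(n-1)/2$; the conclusion survives because both curves have identical ramification data over $\PP^1$). The non-isomorphism argument, however, has a genuine gap. All the machinery you invoke --- Proposition~\ref{inflection}, Lemma~\ref{lemd} and the remark following it, Theorem~\ref{thm3.15} --- applies to \emph{smooth plane} curves only: it rests on Segre's theorem that automorphisms extend to linear maps of $\PP^2$, and the remark after Lemma~\ref{lemd} explicitly requires $y^d=g(x)$ with $\deg g=d$, i.e.\ the plane case. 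When $m<n$ the curve $\cC\colon y^m=\varphi_n(x)$ is not a smooth plane curve (its closure in $\PP^2$ is singular at infinity), so nothing forces an automorphism of $\cC$ to be linear, to preserve any inflection structure, or even to normalize the subgroup $N=\{(x,y)\mapsto(x,\zeta y)\}\cong\ZZ/m\ZZ$ and hence descend to a M\"{o}bius transformation of the $x$-line. You recognize this problem, but your fallback --- comparing abstract group orders, ``$6m$ versus $nm$ or larger'' --- presupposes exactly the unproven assertion $|\Aut(\cC)|=6m$; that assertion is equivalent to the descent/normality statement just mentioned, and neither the paper nor your proposal supplies it for $m<n$. (A smaller quibble: with only the evident lower bound $|\Aut(\cD)|\geq nm$, the order comparison is also inconclusive for small $n$, e.g.\ $n=5$, $q=19$, where $nm<6m$; one would need the full classification of $\Aut(\cD)$.)

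The paper's proof avoids needing any control of $\Aut(\cC)$. Writing $n=tm$, the order-$3$ map of Proposition~\ref{main4d} descends to the explicit automorphism $(x,y)\mapsto\bigl(\frac{2x+12}{2-x},\frac{(-4)^t y}{(2-x)^t}\bigr)$ of $\cC$, of order $3$; on the other side, \cite[Theorem~1]{K} shows --- and this is where the hypothesis $\gcd(3,q+1)=1$ enters --- that $\Aut(\cD)$ contains no element of order $3$. An isomorphism $\cC\to\cD$ would transport the former into the latter group, a contradiction. This one-sided, element-order obstruction is precisely what makes the non-plane case tractable; to repair your proof you would either have to establish normality of $N$ in $\Aut(\cC)$ for these non-plane superelliptic curves, or switch to the paper's argument.
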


\begin{proof}
    First we note that $n$ and $m$ divides $(q+1)/2$ and so we note that both of these curves are covered by the Hermitian curve and are therefore maximal over \( \mathbb{F}_{q^2} \). The two curves evidently have the same genus.
    By \cite[Theorem~1]{K}, the automorphism group of $\cD$ does not contain any element of order \( 3 \). However, from the proof of Proposition~\ref{main4d}, one sees that $\cC$ admits the following automorphism of order \( 3 \):  
    \[
    (x,y) \mapsto \left(\frac{2x + 12}{-x + 2}, \frac{(-4)^t y}{(-x + 2)^t} \right),
    \]
    where \( n = tm \). This distinction implies that the two curves are not isomorphic.
\end{proof}
We conclude the paper by presenting another pair of non-isomorphic maximal curves with the same genus over $\mathbb{F}_{q^2}$.

\begin{prop}\label{M2}
    Let \( q \) be a power of a prime \( p > 3 \) such that \( q \equiv -1 \pmod{8} \). Then, for any divisor \( m\geq 3 \) of \( (q+1)/2 \), the maximal curves \(\mathcal{C}\) and \(\mathcal{D}\) over \( \mathbb{F}_{q^2} \), defined by  
    \[
       \mathcal{C}: y^m = \varphi_4(x) \quad \text{and} \quad \mathcal{D}: y^m = x^4 + 1,
    \]  
    have the same genus but are not isomorphic over \( \mathbb{F}_{q^2} \).
\end{prop}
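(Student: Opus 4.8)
```latex
The plan is to distinguish the two curves $\mathcal{C}\colon y^m=\varphi_4(x)$ and
$\mathcal{D}\colon y^m=x^4+1$ by comparing their automorphism groups, exactly as in
the proof of Proposition~\ref{M1}. First I would establish maximality and the
equality of genera. Since $m\geq 3$ divides $(q+1)/2$ and $4$ divides $(q+1)/2$
(as $q\equiv -1\bmod 8$ gives $8\mid q+1$), both curves are quotients of, or covered
by, the Hermitian curve over $\mathbb{F}_{q^2}$, hence maximal by the Serre covering
argument recalled at the start of Section~\ref{S4}. Both equations have the same shape
$y^m=(\text{separable quartic})$, so they have the same genus; this is immediate from
the genus formula for superelliptic curves (or from $g=(m-1)(4-1)/2$ when $m\mid 4$,
with the obvious modification in general, but in any case the two quartics are separable
of the same degree, giving identical genus).

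The decisive step is to show that $\mathcal{C}$ admits an automorphism of order $3$
whereas $\mathcal{D}$ does not. For $\mathcal{C}$, I would exhibit the order-$3$ map
explicitly, lifting the Möbius transformation $x\mapsto (2x+12)/(2-x)$ from
Proposition~\ref{main4d}. Writing $4=t m$ is not generally possible here, so instead I
note that $\varphi_4(x)=x^4-4x^2+2$ and that the order-$3$ substitution
$x\mapsto(2x+12)/(2-x)$ satisfies
\[
\frac{(2-x)^4}{2}\,\varphi_4\!\left(\frac{2x+12}{2-x}\right)=\varphi_4(x),
\]
which follows from the $d=4$ case of the computation in Proposition~\ref{main4d}
(valid since $4\cdot 4=16$ and the argument there only used $(\sqrt{x+2}\pm2)^2=(s\pm s^{-1})^4$,
which is purely formal). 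The accompanying $y$-coordinate is scaled by $(-4/(2-x))$
raised to an appropriate power: since the multiplier $e$ in the relation satisfies
$e=2/(-4)^4=2/2^8$, I would take $y\mapsto \zeta\, y/(2-x)$ with $\zeta$ an $m$-th root of
$e$, which exists in $\overline{\mathbb{F}}_p$, producing an automorphism whose image in
$G/N$ has order $3$. For $\mathcal{D}$, I would invoke the determination of
$\Aut(y^m=x^4+1)$ via \cite[Theorem~1]{K}, just as in Proposition~\ref{M1}, to conclude
that its automorphism group contains no element of order $3$.

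The main obstacle is verifying the order-$3$ automorphism of $\mathcal{C}$ cleanly, since
$d=4$ does not satisfy $4d=p^r+1$ here and so Proposition~\ref{main4d} does not apply
verbatim. The key observation that rescues this is that the identity
$\tfrac{(2-x)^4}{2}\varphi_4((2x+12)/(2-x))=\varphi_4(x)$ is a \emph{polynomial identity}
holding over any field of characteristic $p>3$ (one checks the factor $(-4)^4/2=2^7$ is
a unit and that the substitution $x=s^4+s^{-4}$ reduces both sides to
$s^{16}+s^{-16}$, again a purely formal manipulation). Thus the Möbius transformation
$x\mapsto(2x+12)/(2-x)$ genuinely permutes the zeros of $\varphi_4(x)$ in every such
characteristic, and Lemma~\ref{lemd} then guarantees it lifts to an order-$3$ element of
$\Aut(\mathcal{C})$ provided the required $m$-th root of the multiplier $e$ lies in
$\overline{\mathbb{F}}_p$, which it does. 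Once this lift is in hand, the contrast with
\cite[Theorem~1]{K} for $\mathcal{D}$ finishes the proof. One should also confirm that
$q\equiv -1\bmod 8$ guarantees $\gcd(3,q+1)$ may be nontrivial, so—unlike in
Proposition~\ref{M1}—the order-$3$ automorphism of $\mathcal{D}$ is excluded not by a
congruence on $q$ but directly by the structure theorem \cite{K}; I would make sure the
hypotheses of that theorem are met for the quartic $x^4+1$ with $m\geq 3$.
```
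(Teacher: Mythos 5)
Your central step is wrong: the M\"obius transformation $x\mapsto(2x+12)/(2-x)$ does \emph{not} permute the zeros of $\varphi_4(x)$ in any characteristic $p>2$, so the claimed order-$3$ automorphism of $\mathcal{C}\colon y^m=\varphi_4(x)$ does not exist. The identity in Proposition~\ref{main4d} is not ``purely formal'': after the substitution $x=s^4+s^{-4}$ its proof still needs
\[
\left(s+\frac{1}{s}\right)^{4d}+\left(s-\frac{1}{s}\right)^{4d}=2\left(s^{4d}+s^{-4d}\right),
\]
and this is exactly where $4d=q+1$ enters, via Frobenius, $(s\pm s^{-1})^{q+1}=(s^{q}\pm s^{-q})(s\pm s^{-1})$; as a formal identity it fails, the left side containing the term $2\binom{4d}{2}s^{4d-4}$. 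For $d=4$ the hypothesis $4d=p^r+1$ would force $p^r=15$, which is impossible, and indeed a direct computation gives
\[
(2-x)^4\,\varphi_4\!\left(\frac{2x+12}{2-x}\right)=2x^4+240x^3+3632x^2+15296x+18464,
\]
which is congruent to $2\varphi_4(x)=2x^4-8x^2+4$ modulo $p$ only when $p$ divides $\gcd(240,\,3640,\,15296,\,18460)=4$, i.e.\ only for the excluded prime $p=2$. This is consistent with Theorem~\ref{d4}: for $p\neq 2,5,7$ the M\"obius transformations permuting the zeros of $\varphi_4$ form a Klein four group, which has no element of order $3$ at all.

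The second half of your argument breaks as well. Unlike Proposition~\ref{M1}, Proposition~\ref{M2} has no hypothesis $\gcd(3,q+1)=1$, so you cannot conclude from \cite[Theorem~1]{K} that $\Aut(\mathcal{D})$ has no element of order $3$: if $3\mid m$ (which is allowed, e.g.\ $q=23$, $m=3$), then $(x,y)\mapsto(x,\omega y)$ with $\omega$ a primitive cube root of unity is an order-$3$ automorphism of $\mathcal{D}\colon y^m=x^4+1$ --- and of $\mathcal{C}$ too, so mere existence of order-$3$ elements cannot distinguish the curves. This is precisely why the paper's proof uses a finer invariant: it compares the \emph{centralizer} of the canonical order-$m$ automorphism $(x,y)\mapsto(x,\zeta y)$. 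For $\mathcal{C}$, following the arguments of Theorem~\ref{d4} (i.e.\ Lemma~\ref{S4lemma} applied to the zeros of $\varphi_4$), this centralizer is $\mathbb{Z}/m\mathbb{Z}\rtimes(\mathbb{Z}/2\mathbb{Z}\times\mathbb{Z}/2\mathbb{Z})$, or $\mathbb{Z}/m\mathbb{Z}\rtimes A_4$ when $p=5$; by \cite[Theorem~1]{K} the corresponding centralizer in $\Aut(\mathcal{D})$ is of neither type, and the case $m=4$ is settled separately by Proposition~\ref{HurFer}. Your maximality and genus observations are fine, but the isomorphism-distinguishing step has to be rebuilt along these lines.
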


\begin{proof}
The conditions yield that both $4$ and $m$ divide $(q+1)/2$, hence $\cC$ and $\cD$
are covered by the Hermitian curve over $\FF_{q^2}$. So both
are maximal over $\FF_{q^2}$, and clearly they have the same genus.
    If \( m = 4 \), the result follows directly from Proposition~\ref{HurFer}. For the remaining cases, let $\zeta\in\FF_{q^2}$
    be a primitive $m$-th root of unity and consider in $\text{Aut}(\cC)$ the subgroup $G$ of
    elements commuting with $(x,y)\mapsto (x,\zeta y)$. Following the arguments in the proof of Theorem~\ref{d4}, one shows that $G$ is isomorphic to either \(\mathbb{Z}/m\mathbb{Z} \rtimes ( \mathbb{Z}/2\mathbb{Z} \times \mathbb{Z}/2\mathbb{Z})\) or \(\mathbb{Z}/m\mathbb{Z} \rtimes A_4\). Using \cite[Theorem~1]{K}, one observes that in \(\text{Aut}(\mathcal{D})\) the
    subgroup of all elements commuting with a given $\sigma$ of order $m$, is not of one of these types. Hence \(\mathcal{C}\) and \(\mathcal{D}\) are not isomorphic.
\end{proof}

\begin{remark}{\rm
Note that for $m=2$, Proposition~\ref{M2}
would discuss the genus $1$ curves $\cC\colon y^2=x^4-4x^2+2$ and $\cD\colon y^2=x^4+1$. These curves have $j$-invariant
$j(\cC)=8000$ and $j(\cD)=1728$. Since $8000-1728=2^7\cdot 7^2$, the unique characteristic such that both equations
correspond to isomorphic and smooth elliptic curves, is $p=7$.
One easily verifies that indeed the two elliptic curves are isomorphic (and maximal) over $\FF_{49}$.}    
\end{remark}

\normalsize{\textbf{Acknowledgment.} 
 \rm{The first author was partially supported by FAPESP grant No. 2023/08271-5, FAEPEX grant No.  2111/26  and CNPq grant No. 302774/2025–
4.}

\end{document}